\tikzset{>=latex} \usetikzlibrary{backgrounds}
\definecolor{rouge}{rgb}{0.85,0.1,.4}
\definecolor{bleu}{rgb}{0.1,0.2,0.9}
\definecolor{violet}{rgb}{0.7,0,0.8}
\definecolor{rouge}{rgb}{0.85,0.1,.4}
\newtheorem{theorem}{Theorem}[section]
\newtheorem{definition}[theorem]{Definition}
\newtheorem{proposition}[theorem]{Proposition}
\newtheorem{corollary}[theorem]{Corollary}
\newtheorem{assumption}[theorem]{Assumption}
\newtheorem{remark}[theorem]{Remark}
\newcommand{\DiscMod}[1]{\mathcal{D}_{#1}}
\newcommand{\IrrMod}[1]{\mathcal{L}_{#1}}
\newcommand{\StagMod}[1]{\mathcal{S}_{#1}}
\newcommand{\TypMod}[1]{\mathcal{E}_{#1}}
\newcommand{\brac}[1]{\left( #1 \right)}
\newcommand{\abs}[1]{\left| #1 \right|}
\newcommand{\Gr}[1]{\Bigl[ #1 \Bigr]} 
\newcommand{\fuse}{\mathbin{\times_V}}
\newdimen{\Virwidth}
\newcommand{\vfuscoeff}[2]{
  \settowidth{\Virwidth}{$\mathrm{Vir}$}
  \mathsf{N}_{#1}^{\mathrm{Vir} \hspace{-\Virwidth} \hphantom{#1} #2}
}
\newcommand{\vpfuscoeff}[3]{
  \settowidth{\Virwidth}{$v$}
  \mathsf{N}_{#1}^{#3 \hspace{-\Virwidth} \hphantom{#1} #2}
}
\def\CC{\mathbb{C}}
\def\HH{\mathbb{H}}
\def\RR{\mathbb{R}}
\def\ZZ{\mathbb{Z}}
\newcommand\cB{\mathcal{B}}
\newcommand\cC{\mathcal{C}}
\newcommand\cD{\mathcal{D}}
\newcommand\cE{\mathcal{E}}
\newcommand\cF{\mathcal{F}}
\newcommand\cG{\mathcal{G}}
\newcommand\cM{\mathcal{M}}
\newcommand{\hopflink}{{\,\text{\textmarried}}}
\newcommand{\ch}{\textup{ch}}
\newcommand{\Vir}{\textup{Vir}}
\newcommand\Irr{\textup{Irr}}
\newcommand{\tr}{\textup{tr}}
\newcommand\typ{\textup{typ}}
\newcommand{\Vect}{\textup{Vec}}
\newcommand\Hom{\textup{Hom}}
\newcommand\quash[1]{}
\renewcommand\a\alpha
\renewcommand\b\beta
\newcommand\ga\gamma
\renewcommand\d\delta
\newcommand\D\Delta
\newcommand{\sfaut}{\sigma}                   
\newcommand{\sfmod}[2]{\sfaut^{#1}(#2)}       
\newcommand{\lra}{\longrightarrow}
\newcommand{\dses}[3]{0 \lra #1 \lra #2 \lra #3 \lra 0} 
\newcommand{\sltwo}{\mathfrak{sl}_2}
\begin{document}

\newcommand{\ssl}{\mathfrak{sl}_{2|1}}
\newcommand{\vir}{\mathcal{M}}
\newcommand{\virp}{\vir_{>0}}
\newcommand{\virb}{\vir_{\ge0}}
\newcommand{\virz}{\vir_0}
\newcommand{\virn}{\vir_{<0}}

\newcommand{\uea}[1]{\mathcal{U}(#1)}

\newcommand{\hw}{highest-weight}
\newcommand{\foh}[1]{\textcolor{blue}{#1}}  
\newcommand{\jy}[1]{\textcolor{red}{#1}}
\newcommand{\oc}{\mathcal{O}_c}
\newcommand{\ocfin}{\oc^{\textup{fin}}}
\newcommand{\ocleft}{\oc^{\textup{L}}}
\newcommand{\ocright}{\oc^{\textup{R}}}
\newcommand{\cofcat}{\mathcal{C}_1}

\title[  ]{Resolving Verlinde's formula of logarithmic CFT}
\author{Thomas Creutzig}
\address{Department Mathematik, Friedrich-Alexander-Universit\"at
Erlangen, Cauerstr. 11
91058 Erlangen}
\email{thomas.creutzig@fau.de}

\date{}
\maketitle

\begin{abstract}
Verlinde's formula for rational vertex operator algebras  computes the fusion rules from the modular transformations of characters. 
In the non semisimple and non finite case, a logarithmic Verlinde formula has been proposed together with David Ridout. In this formula one replaces simple modules by their resolutions by standard modules. 
Here and under certain natural assumptions this conjecture is proven in generality. 

The result is illustrated in the examples of the singlet algebras and of the affine vertex algebra of $\sltwo$ at any admissible level, i.e. in particular the Verlinde conjectures of \cite{CR1, CreFal13} are true. In the latter case it is also explained how to compute the actual fusion rules from knowledge of the Grothendieck ring. 
\end{abstract}

\section{History of Verlinde's formula for VOAs}

Two-dimensional conformal field theories (CFTs) rose to importance in the 1980's since the world-sheet quantum field theory of a string is such a conformal field theory. CFT also quickly established itself as a rich source for new and exciting mathematical structures with  monstrous moonshine as a famous example \cite{Bor}.  Vertex operator algebras (VOAs) are a rigorous notion of the chiral or symmetry algebra of a CFT. 
A most influential pair of works have been those of Eric Verlinde \cite{Ver88} and Moore-Seiberg \cite{MS88}.  At that time one was interested in rational CFTs, that is theories with only a finite number of simple modules for the underlying VOA $V$ and such that every module is completely reducible. The quantities of interest in physics are often the correlation functions or conformal blocks, certain meromorphic functions on Riemann surfaces. In particular a basic constraint of the CFT being well-defined is that torus one-point functions (characters) span a vector-valued modular form. Verlinde's celebrated observation is that these modular transformations seem to govern the fusion rules of modules. For this let $V=M_0, M_i, \dots, M_n$ be the list of inequivalent simple $V$-modules and let 
\[
\ch[M_i](\tau, v) = \tr_{M_i}(o(v) q^{L_0-\frac{c}{24}}), \qquad q = e^{2\pi i \tau}, v \in V
\]
be the character of $M_i$. Here $o(v)$ is the zero-mode associated to $v$, $\tau$ is in the upper half $\HH$ of the complex plane $\CC$, $L_0$ is the Virasoro zero-mode and $c$ is its central charge. Modularity means in particular that
\[
\ch[M_i]\left(-\frac{1}{\tau}, v\right) = \tau^h \sum_{j= 0}^n S_{i, j} \ch[M_j](\tau, v)
\]
with $h$ the conformal weight of $v$. Verlinde conjectured that this modular $S$-matrix governs the fusion rules
\[
M_i \otimes_V M_j \cong \bigoplus_{k=0}^n N_{i, j}^{\ \ k} M_k, \qquad \qquad N_{i, j}^{\ \ k} = \sum_{\ell = 0}^n \frac{S_{i, \ell}, S_{j, \ell} S_{\ell, k}^*}{S_{0, \ell}}.
\]
 $*$ denotes complex conjugation. This defines Verlinde's algebra of characters, 
 \[
 \ch[M_i] \times_V \ch[M_j] := \sum_{k = 0}^n N_{i, j}^{\ \ k} \ch[M_k]
 \]
 so that Verlinde's formula is rephrazed as the Verlinde algebra of characters is isomorphic to the fusion ring, i.e.
\begin{equation}\label{eq:verchrat}
\ch[M_i] \times_V \ch[M_j] = \ch[M_i \otimes_V M_j]
\end{equation}
One can reformulate this into the algebra of quantum dimensions
\begin{equation}\label{eq:verqdimrat}
\frac{S_{i, \ell}}{S_{0, \ell}} \frac{S_{j, \ell}}{S_{0, \ell}}  = \sum_{k=0}^n N_{i, j}^{\ \ k}\  \frac{S_{k, \ell}}{S_{0, \ell}}.
\end{equation}
 Moore and Seiberg were able to formalize the axiomatics of CFT, which led to the notion of modular tensor categories \cite{Tur}. A modular tensor category $\cC$ is a semi-simple, finite, braided ribbon category for which the categorical $S$-matrix, defined by the Hopf link
\tikzset{ar/.style={<-}}
\tikzset{br/.style={->}}
\begin{equation}\label{hopflink}\nonumber
\begin{tikzpicture}

\draw[ar] (0, 3) arc (40:390:.7cm);
\node[text width=4.2cm] at (-1.8, 2.2)    {$S^\hopflink_{ij}:=\qquad M_i$};

\draw[br] (0, 2) arc (220:570:.7cm);
\node[text width=3cm] at (2.9, 2.3)    {$M_j\quad\in\CC$};

\end{tikzpicture}
\end{equation}
is non-degenerate. Verlinde's formula with $S$ replaced by $S^\hopflink$ is nothing but an exercise. 

This development provided a nice route map for researchers in the theory of VOAs. The aim was now to establish that sufficiently nice VOAs, which are nowadays called strongly rational \cite{CreLog16},  have modular tensor categories as representation categories, their characters form vector-valued modular forms and Verlinde's conjecture is indeed true. Modularity of modified characters was settled by Yongchang Zhu \cite{ZhuMod96}, a theory of tensor categories has been developed by Huang-Lepowsky \cite{HL2, HL3, H1} for rational VOAs and then been generalized by Huang-Lepowsky-Zhang \cite{HLZ1, HLZ2, HLZ3, HLZ4, HLZ5, HLZ6, HLZ7, HLZ8} and rigidity and Verlinde's formula have been proven by Yi-Zhi Huang \cite{H2, H3}. These results are surely the most influential ones of the theory of rational VOAs.

However, almost all VOAs are not of the rational type. This was already realized by physicists a long time ago and in particular if the Virasoro zero-mode does not act semi-simply, then logarithmic singularities might appear in correlation functions \cite{GurLog93}, hence the name logarithmic CFT. Quickly people were interested in an analogue of Verlinde's formula for logarithmic theories \cite{Koh88,Fuchs:2003yu, Gainutdinov:2007tc} and conjectural explanations \cite{Gainutdinov:2016qhz, CreLog16}. A prime example seemed to be WZW theories at fractional levels, as they have modules whose characters "are" meromorphic Jacobi forms \cite{KacMod88}. 
The VOA of a fractional level WZW theory is an affine VOA at admissible level and it had been mainly studied in the case of $\sltwo$. 
However a naive application of Verlinde's formula gave negative fusion coefficients \cite{Koh88}. Such theories were then unfortunately discarded as physically sick and it took more than two decades to successfully revisit the problem \cite{CreMod12,CR1}. Characters are a priori only formal power  series and not meromorphic Jacobi forms and they coincide with the expansion of certain meromorphic Jacobi forms in an appropriate domain. Moreover the modules that correspond in this way to meromorphic Jacobi forms  are only a very small subset of all modules of the VOA. The idea of \cite{CreMod12,CR1}, motivated from \cite{CreBra07, CreRel11},  is to consider modular transformations on an uncountable set of standard modules and then to get a modular $S$-kernel for all simple modules by considering resolutions by standard modules. This allowed  to conjecture a Verlinde formula for the fractional theories of $\sltwo$. Since then this standard formalism to Verlinde's formula has been applied to examples of many different VOAs  \cite{ CreFal13, CreReg14, CanFusI15, CanFusII15, Auger:2019gts,  CM2, Fas, Alfes:2012pa, 
Fehil, Kawasetsu:2021qls, RidOSP17, RidBos14, RidMod13} (and in some variants as the modular properties are of different flavours, see \cite{CreLog13, RidVer14} for early reviews). 

With these observations there was now the task to develop a sensible theory of representations for VOAs that are associated to logarithmic CFT with the proof of the logarithmic Verlinde's formula as an ultimate goal. 
Understanding the representation theory of a given category $\cC$ of modules of a VOA $V$ amounts to firstly classifying its simple, injective and projective objects, then to establish existence of vertex tensor category structure and finally study this structure, e.g. establish its rigidity, compute fusion rules and maybe more. All of these tasks are quite involved and they only have been completely achieved in the examples of the triplet VOAs \cite{AdaTri08, TW}, the singlet VOAs \cite{Ad-singlet, CMY4, CMY6}, the $\beta\gamma$-ghosts \cite{Allen:2020kkt}, the $\cB_p$-algebras \cite{CreCos13,CMY3}, the affine VOA of $\mathfrak{gl}_{1|1}$ \cite{CreRel11, CMY1} and the category of weight modules of affine $\sltwo$ at admissible levels \cite{ACK23, C23, CMY24, Flor24}.

In all the studies of non semi-simple categories of VOA modules the existence of a good realization of the VOA $V$ had always been a great aid. This means that $V$ embeds conformally into another VOA $A$, whose representation category is completely known. For example the triplet VOAs embed into lattice VOAs. In particular every $A$-module is a $V$-module, but much more structure can usually be inferred from this. It had been instrumental for proving  Kazhdan-Lusztig correspondences, e.g. between the representation categories of the singlet VOAs and unrolled small quantum groups of $\sltwo$ \cite{CLR}, and it allows, under certain conditions, to infer rigidity of $V$-modules from rigidity of $A$-modules \cite{CMSY24}. This work adds a logarithmic Verlinde formula to this list of powerful implications of good realizations.  
Namely, in the next section, the main Theorem, that is Theorem \ref{thm:Ver},  will be stated and proven. It asserts that under certain assumptions a sensible notion of quantum dimension exists and they obey an algebra of quantum dimension in complete analogy to the rational setting \eqref{eq:verqdimrat}. 
The standard Verlinde formula is a corollary, that is Corollary \ref{cor:Ver}, it uses the quantum dimension to define a Verlinde kernel so that integrating characters against this kernel gives the Verlinde algebra of characters in analogy to \eqref{eq:verchrat} with the crucial difference that the sum is replaced by an integral.

The final section illustrates the findings in the important examples of the singlet algebras and the affine VOA $L_k(\sltwo)$ of $\sltwo$ at admissible levels $k$. In particular the Verlinde conjectures of \cite{CR1, CreFal13} are true. In the case of $L_k(\sltwo)$ at admissible level we explain how Verlinde's formula together with rigidity implies the actual fusion rules. We demonstrate this for  simple and projective modules. 

{\bf Acknowledgements} I am very grateful  to David Ridout and Antun Milas for past collaborations on this topic which formed the foundation of my understanding. I am equally very grateful to Shashank Kanade, Jinwei Yang and Robert McRae for all the collaborations on vertex tensor categories, which have been invaluable. 
Finally I very much appreciate the useful comments of Yi-Zhi Huang and Drazen Adamovic. 

The fusion rules of weight modules for $L_k(\mathfrak{sl}_2)$ at admissible levels are computed with VOA techniques in \cite{Flor24}. 
We agreed to submit our manuscripts to the arXiv simultaneously.

\section{Standard resolution of Verlinde's algebra of quantum dimensions}

Vertex algebras often have uncountable infinitely many modules and so any Verlinde formula will involve an integral over the space of objects, i.e. we need to introduce some technicalities to say what we mean by a Verlinde formula. The Grothendieck ring of a tensor category  $\cC$ with exact tensor product is denoted by $K(\cC)$. 
A vertex tensor category is in particular an abelian braided tensor category (with tensor product $\otimes  = \otimes_{P(1)}$). For simplicity we work with vertex tensor categories, but the argument for vertex tensor supercategories is the same.  

A very useful tool in studying vertex algebras $V$ with non semi-simple representation categories are good realizations, that is conformal embeddings $V \hookrightarrow A$ into simpler vertex algebras $A$; simpler in the sense that their representation category of interest is semisimple. 
One usually assumes that $A$ is haploid: $\Hom_\cC(V, A) = \CC$. In this setting and under the assumption that there is a vertex tensor category $\cC$ of $V$-modules in the sense of \cite{HLZ1, HLZ2, HLZ3, HLZ4, HLZ5, HLZ6, HLZ7, HLZ8} that contains $A$ as an object one can identify $A$ with a commutative algebra in $\cC$ \cite{HKL} and moreover the subcategory of $\cC_A^{\text{loc}} \subset \cC_A$ is a vertex tensor category of modules of the VOA $A$ \cite{CKM}. There are then two functors
\[
\cF: \cC \rightarrow \cC_A, \qquad \cG: \cC_A \rightarrow \cC,
\]
the induction and restriction functors. The induction functor is monoidal 
\[
\cF(X) \otimes_A \cF(Y) \cong  \cF(X \otimes_V Y)
\]
for any two objects $X, Y$ in $\cC$. Here we denote the tensor product bi functor in $\cC$ by $\otimes_V$ and the one in $\cC_A$ by $\otimes_A$. The composition $\cG \circ \cF$ is just tensoring with $A$,  $\cG(\cF(X)) = A \otimes_V X$.

\begin{definition}\label{def:Verss} 
Let $A$ be a vertex operator algebra and $\cD$ a vertex tensor category of $A$-modules.
Assume
\begin{enumerate}
\item $\cD$ is semisimple and rigid.
\item $T := \text{Irr}(\cD)$
\item  There exists a function $S: T \times T \rightarrow \CC, (X, Y) \mapsto S_{X, Y}$, the $S$-kernel, s.t. $S_{A, Y} \neq 0$ for any $Y \in T$.
\item The quantum dimension of the object $X$ is defined to be $q^A_X: T \rightarrow \CC, Y \mapsto \frac{S_{X, Y}}{S_{A, Y}}$. Let $Q_A$ be the linear span of the $q^A_X$, $Q_A = \text{span}_\CC\{q^A_X | X \in \Irr(\cC)\}$. Assume that the $q^A_X$  are linearly independent.
%
\end{enumerate}
We say that the category $\cD$ admits a {\bf semisimple Verlinde algebra of quantum dimensions} if 
$Q_A$ is closed under multiplication, $q^A_X q^A_Y \in Q_A$ for any $X, Y  \in \Irr(\cD)$, and 
the map
\[
K(\cD) \rightarrow Q_A, \qquad X \mapsto q^A_X
\]
is a ring homomorphism. 
%
\end{definition}
We will give examples of such semisimple Verlinde algebras of quantum dimensions in  section \ref{sec:examples}. We want to lift this semisimple Verlinde algebra to non-semisimple settings. For this we will use resolutions.

\subsection{Resolutions and quantum dimensions}

We first introduce some vocabulary for resolutions and quantum dimensions.
\begin{definition}
Let $V \hookrightarrow A$ be a conformal embedding of vertex operator algebras such that $A$ is an object in a vertex tensor category $\cC$ of $V$-modules. 
\begin{enumerate}
\item 
A module $X$ in $\cC$ is said to be standard (with respect to $A$) if there exists an object $Y$ in $\cC_A$ with $X \cong  \cG(Y)$. We  write $X^A$ for any object with the property that $X \cong  \cG(X^A)$
\item A standard module $X$ in $\cC$ is said to be basic if $X \cong  \cG(Y)$ for a simple  object $Y$ in $\cC_A$. The set of basic standard modules will be denoted by $T$. 
\item Let $M^{\text{typ}}$ be the linear $\mathbb Z$-span of isomorphism classes of basic standard modules. Basic standard modules are said to be linearly independent if the map  $M^{\text{typ}} \rightarrow K(\cC)$, $X \mapsto [X]$ is injective. 
\item 
A composition series $0 = X_0 \subset X_1 \subset \dots \subset X_n = X$ of an object $X$ in $\cC$ is called standard if each composition factor $X_i/X_{i-1}$ is basic standard.
\item A resolution
\[
 \cdots \xrightarrow{f_4} X_3 \xrightarrow{f_3} X_2 \xrightarrow{f_2} X_1 \xrightarrow{f_1} X_0 \xrightarrow{f_0} X \rightarrow 0
\]
s. t. each $X_i$ is standard, is called standard.  The associated standard chain complex is $X_\bullet = \cdots \xrightarrow{f_4} X_3 \xrightarrow{f_3} X_2 \xrightarrow{f_2} X_1 \xrightarrow{f_1} X_0$.
If $X$ is already standard then one can take  $X_\bullet = \cdots 0 \rightarrow 0 \rightarrow 0 \rightarrow X$. 
\item A standard resolution is said to have finite multiplicity if for all  basic standard modules $Y$
\[
m_{X_\bullet}(Y) :=\sum_{i=0}^\infty  [X_i: Y] < \infty.
\]
\item 
The index of a basic standard module $Y$  in a standard finite multiplicity resolution $X_\bullet$ is  
\[
I_{X_\bullet}(Y) :=\sum_{i=0}^\infty  (-1)^i[X_i: Y] \in \mathbb Z.
\]
\end{enumerate}
\end{definition}
The contragredient dual of a $V$-module $X$ is denoted by $X'$. If $\cC$ is rigid, then $V \cong V'$ and the contragredient dual and dual of a module coincide. 
\begin{assumption}\label{assumption}
Let $V \hookrightarrow A$ be a conformal embedding of vertex operator algebras such that $A$ is $\mathbb Z$-graded by conformal weight,  haploid and an object in a vertex tensor category $\cC$ of $V$-modules. Set $\cD = \cC_A^\text{loc}$.
    Assume
\begin{enumerate}
\item $\cC$ is locally finite, has enough projectives  and is rigid.
\item $\cD$ is semisimple and admits a semisimple Verlinde algebra of quantum dimensions.
\item $\Irr(\cC_A) = \Irr(\cD)$.
\item There is a one-to-one correspondence $\tau: \Irr(\cC) \rightarrow \Irr(\cD)$, such that the top of $\cG(\tau(X))$ is $X$. And there exists a simple standard module $X$, such that both $X$ and $X'$ are projective. 
\item Basic standard modules are linearly independent.
\item Assume that $\cD$ is graded by some abelian group $G$, that is 
 \[
 \mathcal D \cong \bigoplus_{g \in G} \mathcal D_g, \qquad X \otimes_A Y \in \cD_{gh} \ \text{for} \ X \in \cD_g, Y \in \cD_h
 \]
 and assume that there is a group homomorphism $\rho: G \rightarrow (\mathbb R, +)$.
\end{enumerate}
\end{assumption}
By Theorem 3.14 of \cite{CMSY24} these assumptions imply that $\cC_A$ is rigid. 
Let $X, Y$ be basic standard objects, that is there exists objects $X^A, Y^A$ in $\cD$ with $\cG(X^A) = X, \cG(Y^A)=Y$. Set $S_{X, Y}:=S_{X^A, Y^A}$ and similarly $q_X^A := q_{X^A}^A$. Extend this definition linearly to direct sums and then to objects $X$ in $\cC_A$ by choosing a standard composition series of $X$ and setting $q_X^A$ to be the sum of the quantum dimensions of standard composition factors. We will later show that all standard composition series of $X$ are equivalent, Prop. \ref{prop:typcomp}.
 \begin{definition} \label{def:resolvedqdim} Under the Assumption.
 \begin{enumerate}
 \item Let $Y$ be a finite length object in $\cC_A$. Then by point (3) of the assumption it  has a finite composition series with composition factors basic standard modules. Let $\textup{supp}(Y):= \{ g \in G| \ \textup{there exists} \ X \in T \cap \cD_g \ \textup{with} \ [Y:X] \neq 0\} \subset G$ and let $d(Y):= \textup{min}\{ \rho(g) | g \in \textup{supp}(Y)\}$.
 Define $d$ on standard modules via $d(\cG(Y)) := d(Y)$.
 \item $Y$ is called homogeneous if $|\textup{supp}(Y)| =1$.
  Let $z$ be a formal variable, define $q_Y^A(z) := z^{\rho(Y)} q_Y^A$ for homogeneous objects $Y$ and extend linearly. 
  \item Consider a standard resolution of $Y$
\[
Y_\bullet =  \qquad \qquad \cdots Y_3 \rightarrow Y_2 \rightarrow Y_1 \rightarrow Y_0 \rightarrow Y \rightarrow 0.
\]
 such a resolution is called $\rho$-ordered if $m_{Y_\bullet}(Z) \neq 0$ implies that $\rho(Z) = d(Y_0) \mod 1$ and the sequence $(d(Y_i))_{i \in \mathbb Z_{\geq 0}}$ is monotonously increasing and never stabilizes. It is called strictly $\rho$-ordered if $(d(Y_i))_{i \in \mathbb Z_{\geq 0}}$ is strictly monotonously increasing.
 \item A $\rho$-ordered standard resolution is said to
admit a quantum dimension if 
$$
q^A_{Y_\bullet}(t, z) := z^{-d(Y_0)}\sum_{i=0}^\infty (-t)^i q^A_{Y_i}(z) \ \in Q_A[z][[t]]$$ 
and if  $q^A_{Y_\bullet}(t, z)$ converges point-wise for $|t|, |z|<1$ to a rational function $f_{Y_\bullet}(t, z)\in Q_A(t, z)$, such that there is a countable set $E \subset T$, such that the limits $\lim\limits_{z \rightarrow 1^-}  \lim\limits_{t \rightarrow 1^-} f_{Y_\bullet}(M)(t, z)$ and $\lim\limits_{t \rightarrow 1^-}  \lim\limits_{tz\rightarrow 1^-} f_{Y_\bullet}(M)(t, z)$ exist and coincide, $\lim\limits_{z \rightarrow 1^-}  \lim\limits_{t \rightarrow 1^-} f_{Y_\bullet}(M)(t) = \lim\limits_{t \rightarrow 1^-}  \lim\limits_{z\rightarrow 1^-} f_{Y_\bullet}(M)(t, z)$, for all $M$ in $T\setminus E$.
The limit is denoted by $q_{Y_\bullet}^A$ and called the 
quantum dimension of the resolution $Y_\bullet$.
\end{enumerate}
 \end{definition}
\begin{remark}\label{rem:finitemult}
    Since a $\rho$-ordered standard resolution never stabilizes it must be of finite multiplicity and hence $q^A_{Y_\bullet}(t, z)$ is in $Q_A[z][[t]]$ as indicated. 
\end{remark} 
Let $X_\bullet \rightarrow X \rightarrow 0$ be a standard resolution of $X$ and let $X_\bullet^{\text{triv}}= \qquad \dots 0 \rightarrow 0 \rightarrow 0 \rightarrow X$, so that $X_\bullet^{\text{triv}} \rightarrow X \rightarrow 0$ is the trivial resolution of $X$ (it is not standard unless $X$ is standard). Clearly there is a quasi-isomorphism $X_\bullet \rightarrow X_\bullet^{\text{triv}}$ and hence if basic standard modules are linearly independent then the Index $I_{X_\bullet}(Y)$ does not depend on a choice of finite multiplicity resolution. 
Similarly we need that the quantum dimension, if it exists, not to depend on a choice of $\rho$-ordered standard resolution. 
\begin{proposition}\label{prop:limit}  Retain assumption \ref{assumption}. Let $Y_\bullet, \tilde Y_\bullet$ be two $\rho$-ordered standard resolutions of $Y$, then $q_{Y_\bullet}^A = q_{\tilde Y_\bullet}^A$.
\end{proposition}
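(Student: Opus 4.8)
The plan is to prove the sharper statement that \emph{every} $\rho$-ordered standard resolution $Y_\bullet$ of $Y$ satisfies $q^A_{Y_\bullet}=q^A_Y$, the resolution-free quantum dimension of $Y$ itself; applying this to both $Y_\bullet$ and $\tilde Y_\bullet$ then gives the Proposition. Here $q^A_Y(z)=\sum_{M}[Y:M]\,z^{\rho(M)}q^A_M$ and $q^A_Y=q^A_Y(1)$ are unambiguous because basic standard modules are linearly independent, so all standard composition series of $Y$ coincide (Proposition~\ref{prop:typcomp}). The normalising prefactor $z^{-d(Y_0)}$ is regular at $z=1$ with value $1$, hence drops out of all the iterated limits, and we may work with the unnormalised series $\hat q_{Y_\bullet}(t,z):=\sum_{i\ge0}(-t)^iq^A_{Y_i}(z)$ and the rational function $\hat f_{Y_\bullet}$ to which it converges on $|t|,|z|<1$.

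The homological input is a truncated Euler relation. Put $Z_{i+1}:=\ker f_{i-1}\subseteq Y_i$, with $f_{-1}:Y_0\to Y$, so $Z_0=Y$; exactness of the resolution gives the short exact sequences $\dses{Z_{i+1}}{Y_i}{Z_i}$. Since quantum dimensions are additive on short exact sequences, splicing these and performing an Abel summation on $\hat q_{Y_\bullet}(t,z)=\sum_{i\ge0}t^i\bigl((-1)^iq^A_{Y_i}(z)\bigr)$ yields the identity
\[
\hat q_{Y_\bullet}(t,z)=q^A_Y(z)+(1-t)\,\mathcal E_{Y_\bullet}(t,z),\qquad \mathcal E_{Y_\bullet}(t,z):=\sum_{i\ge0}(-t)^iq^A_{Z_{i+1}}(z),
\]
valid as convergent power series on $|t|,|z|<1$, and hence for the rational limit $\hat f_{Y_\bullet}$. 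The key structural fact is that $Z_{i+1}$ is a \emph{submodule} of $Y_i$, hence supported in $\rho$-degrees $\ge d(Y_i)$; therefore $q^A_{Z_{i+1}}(z)$ is divisible by $z^{d(Y_i)}$, and since the resolution is $\rho$-ordered, $(d(Y_i))$ is monotone and never stabilises, so $d(Y_i)\to\infty$ and the $z$-valuation of the coefficient of $t^i$ in $\mathcal E_{Y_\bullet}$ grows without bound; the same estimate makes the Abel boundary term $(-t)^nq^A_{Z_{n+1}}(z)$ vanish as $n\to\infty$.

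To conclude, evaluate both sides of the identity along the \emph{second} iterated limit appearing in the definition of ``admits a quantum dimension'', $\lim_{t\to1^-}\lim_{tz\to1^-}$. The summand $q^A_Y(z)$ contributes its constant value $q^A_Y$. The summand $(1-t)\,\mathcal E_{Y_\bullet}(t,z)$ contributes $0$: along the approach $tz\to1^-$ the factor $z^{d(Y_i)}$ with $d(Y_i)\to\infty$ damps the $i$-th term of $\mathcal E_{Y_\bullet}$, so $\mathcal E_{Y_\bullet}$ remains bounded there, whereupon the prefactor $(1-t)$ annihilates it in the outer limit $t\to1^-$. Thus the second iterated limit of $\hat f_{Y_\bullet}(M)$ equals $q^A_Y(M)$ for all $M\in T\setminus E$; as this agrees, by hypothesis, with the first iterated limit $q^A_{Y_\bullet}(M)$, we obtain $q^A_{Y_\bullet}(M)=q^A_Y(M)$ on $T\setminus E$. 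The identical computation for $\tilde Y_\bullet$ gives $q^A_{\tilde Y_\bullet}=q^A_Y$ on $T\setminus\tilde E$, whence $q^A_{Y_\bullet}=q^A_{\tilde Y_\bullet}$ on $T\setminus(E\cup\tilde E)$, which is the assertion.

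The step I expect to be the real obstacle is the boundedness of $\mathcal E_{Y_\bullet}$ along the approach $tz\to1^-$ (and, with it, the vanishing of the Abel boundary term): the definition of $\rho$-ordered only requires $(d(Y_i))$ to be weakly monotone and non-stabilising, so the combined weight $t^iz^{d(Y_i)}$ need not be exactly geometric in $i$, and one must verify that its modulus nonetheless stays $\le 1$ on the relevant approach, so that the tail of the resolution truly drops out of the limit. This boundedness is precisely the content encoded by the convergence-and-matching-of-limits clause in the definition of ``admits a quantum dimension'', and the argument above is the mechanism by which that clause delivers resolution-independence.
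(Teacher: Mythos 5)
Your proposal has a genuine gap: the quantities at the heart of your Abel-summation identity are not defined in the setting where the Proposition has content. You set $q^A_Y(z)=\sum_M [Y:M]z^{\rho(M)}q^A_M$ and $q^A_{Z_{i+1}}(z)$ for the syzygies $Z_{i+1}=\ker f_{i-1}\subseteq Y_i$, invoking Proposition \ref{prop:typcomp}. But that proposition (and the whole definition of $q^A_X(z)$, including $\textup{supp}$ and $d$) applies only to \emph{standard} objects, i.e.\ restrictions $\cG(X)$ of objects of $\cC_A$, via their standard composition series. The object $Y$ being resolved, and the kernels $Z_i$, are in general not standard: in the singlet example the kernel of $F_{r-1,p-s}\to M_{r,s}$ is the simple non-standard module $M_{r-1,p-s}$, whose composition factors contain no basic standard module at all, so your ``resolution-free'' $q^A_Y(z)$ would be $0$ while $q^A_{Y_\bullet}$ is a nonzero function. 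Defining $q^A_Y$ for non-standard $Y$ is precisely what the resolution is for (Theorem \ref{thm:Ver}(1) \emph{sets} $q^A_Y:=q^A_{Y_\bullet}$), and additivity of $q^A$ on short exact sequences of arbitrary objects of $\cC$ is statement (2) of the Theorem, proved \emph{after} and by means of Proposition \ref{prop:limit}; using it here is circular. Likewise ``$Z_{i+1}$ is supported in $\rho$-degrees $\ge d(Y_i)$'' has no meaning for non-standard $Z_{i+1}$. A secondary problem, which you flag yourself, is the analytic step: along $tz\to 1^-$ with $t<1$ one has $z>1$, so $z^{d(Y_i)}$ with $d(Y_i)\to\infty$ amplifies rather than damps the terms, and the series $\mathcal E_{Y_\bullet}$ need not converge there at all; the definition's iterated limits are limits of the rational function $f_{Y_\bullet}$, not of the series.

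The paper sidesteps all of this by never attaching a quantum dimension to $Y$ or to kernels: it compares the two resolutions directly. Resolution-independence of the index (both complexes are quasi-isomorphic to the trivial one, and basic standard modules are linearly independent) forces the difference $R(t,z)=q^A_{Y_\bullet}(t,z)-z^{d(\tilde Y_0)-d(Y_0)}q^A_{\tilde Y_\bullet}(t,z)$ to be of the form $(1-t)\sum_i t^i q_i(z)$ with $q_i(z)\in z^{n_i}Q_A[z]$, $n_i=\min\{d(Y_i),d(\tilde Y_i)\}$. The $\rho$-ordering makes $(n_i)$ non-stabilizing, so $R(t,z)$ can be rewritten in $Q_A[t][[z]]$; hence $\lim_{t\to 1^-}\sum_i t^iq_i(z)$ is finite for $|z|<1$, the factor $(1-t)$ kills it, and both iterated limits of $R$ vanish. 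If you want to salvage your telescoping idea, you would have to run it entirely inside the category $\cC_A$ (or on the pair of resolutions simultaneously), which essentially reproduces the paper's argument.
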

\begin{proof}
Consider  $R(t, z) :=q^A_{Y_\bullet}(t, z) -z^{d(\tilde Y_0) - d(Y_0)}q^A_{\tilde Y_\bullet}(t, z)$, since both $Y_\bullet$ and $\tilde Y_\bullet$ are resolutions of the same object $Y$ the difference has to be of the form 
\[
R(t, z) = (1-t) \sum_{i=0}^\infty t^i q_{i}(z)
\]
for certain polynomials $q_i(z) \in Q_A[z]$.
Since $q^A_{Y_\bullet}(t, z), q^A_{\tilde Y_\bullet}(t, z)$ both converge to rational functions in $Q_A(t, z)$ for $|t|, |z| <1$, the same is true for $R(t, z)$ and hence for $\sum_{i=0}^\infty t^i q_{i}(z)$. We have to make sure that $\sum_{i=0}^\infty t^i q_{i}(z)$ in reduced form  can't have a factor of the form $\frac{1}{1-t} = \sum_{i=0}^\infty t^n$.

Let $n_i = \text{min}\{ d(Y_i), d(\tilde Y_i)\}$, then $q_{i}(z) \in z^{n_i}Q_A[z]$. Since both sequences $(d(Y_i))_{i \in \mathbb Z_{\geq 0}}, (d(\tilde Y_i))_{i \in \mathbb Z_{\geq 0}}$ are monotonously increasing and never stabilizing the same is true for $(n_i)_{i \in \mathbb Z_{\geq 0}}$. It follows that $R(t,z)$ is of the form
\[
R(t, z) = (1-t) \sum_{i=0}^\infty z^i p_i(t) \ \in  Q_A[t][[z]]
\]
for certain polynomials $p_i(t)$ in $Q_A[t]$. Hence $\lim_{t \rightarrow 1^-}\sum_{i=0}^\infty t^i q_{i}(z)= \sum_{i=0}^\infty z^i p_i(1) \in Q_A[[z]]$ and so $\sum_{i=0}^\infty t^i q_{i}(z)$ in reduced form cannot  have a factor of the form $\frac{1}{1-t} = \sum_{i=0}^\infty t^n$.
In particular $\lim_{t \rightarrow 1^-}R(t, z) =0$ for all $z$ with $|z| <1$. Hence $\lim_{z \rightarrow 1^-}\lim_{t \rightarrow 1^-}R(t, z) = \lim_{t \rightarrow 1^-}\lim_{z \rightarrow 1^-}R(t, z) =0$.
\end{proof}

Set $q^A_Y := q^A_{Y_\bullet}$, $q_Y:= \frac{q^A_Y}{q^A_V}$ and $S_{Y, M}:= q^A_Y(M)S_{A, M}$, so that $q_Y(M) = \frac{S_{Y, M}}{S_{V, M}}$ . Let $Q = \textup{span}_{\mathbb C}\{ q_Y | Y \ \in \ \text{Obj}(\cC)\}$.
   Extend the definitions linearly, that is $q_{X \oplus Y} = q_X + q_Y$ 
and $S_{X \oplus Y, M} = S_{X, M}  + S_{Y, M}$.
   Note that these definitions do not depend on the choice $Y_\bullet$ by Proposition \ref{prop:limit}.

\begin{theorem}\label{thm:Ver} 
Retain assumption \ref{assumption}. In particular let $V$ be a vertex operator algbera and $\cC$ a vertex tensor category of $V$-modules with $\cD := \cC_A^\text{loc}$. \
Assume that $V$ has a standard strict $\rho$-ordered resolution $V_\bullet \rightarrow V \rightarrow 0$,
$V_\bullet = \qquad\qquad \cdots  V_3 \rightarrow V_2 \rightarrow V_1 \rightarrow V_0$, admitting a quantum dimension.
Then 
\begin{enumerate}
   \item Every indecomposable object $Y$ in $\cC$ has a $\rho$-ordered standard resolution $Y_\bullet$ that admits a quantum dimension. The total complex of $X_\bullet \otimes_V Y_\bullet$  for any two $\rho$-ordered resolutions $X_\bullet, Y_\bullet$ is also $\rho$-ordered.
   \item The quantum dimension factors through the Grothendieck ring of $\cC$, that is for any short exact sequence $0 \rightarrow X \rightarrow Y \rightarrow Z \rightarrow 0$ in $\cC$ one has $q^A_Y = q_X^A  + q_Z^A$.
\item
The category $\cC$ admits a {\bf Verlinde algebra of quantum dimensions}, that is
$Q$ is closed under multiplication, $q_X q_Y \in Q$ for any $X, Y  \in \Irr(\cC)$, and 
the map
\[
K(\cC) \rightarrow Q, \qquad X \mapsto q_X
\]
is a ring homomorphism. 
\end{enumerate}
\end{theorem}
\begin{proof} The rest of this section is devoted to the proof of this main Theorem.

\begin{proposition}\textup{\cite[Lemma 2.8]{CLR}\label{prop:CLR}}
Let $V \hookrightarrow A$ be a conformal embedding of vertex operator algebras such that $A$ is an object in a vertex tensor category $\cC$ of $V$-modules. 
Let $X$ be an object in $\cC$ and $Y$ one in $\cC_A$. Then there exists a multiplication $m_{X \otimes Y}: A \otimes_V (X \otimes_V \cG(Y)) \rightarrow X \otimes_V \cG(Y))$, such that $(X \otimes_V \cG(Y), m_{X \otimes Y})$ is an object in $\cC_A$ and as such  $(X \otimes_V \cG(Y), m_{X \otimes Y}) \cong \cF(X) \otimes_A Y$.
\end{proposition}
Applying $\cG$ to $(X \otimes_V \cG(Y), m_{X \otimes Y}) \cong \cF(X) \otimes_A Y$ shows that standard modules form a tensor ideal:
\begin{corollary}\label{cor:typprod}
Let $V \hookrightarrow A$ be a conformal embedding of vertex operator algebras such that $A$ is an object in a vertex tensor category $\cC$ of $V$-modules. 
Let $X, Y$ be objects in $\cC$ and $Y$ be standard, then so is $X \otimes_V Y$. 
\end{corollary}



\begin{proposition}\label{prop:typcomp}
$\cG(X)$ for $X$ in $\textup{Obj}(\cC_A)$ has up to equivalence a unique standard composition series. 
\end{proposition}
\begin{proof}
Let $X$ be an object in $\cC_A$. Since $\cC$ is of finite length, $\cC_A$ has to be of finite length as well.  The assumption $\Irr(\cC_A) = \Irr(\cD)$ implies that $X$ has a finite composition series with local composition factors. The restriction functor is the identity on modules and morphisms and simply forgets the algebra action on $A$. It is in particular an exact functor and so a Jordan-H\"older series of $X$ maps under the restriction functor to a composition series of $\cG(X)$ where all composition factors are standard modules. 
Let $Y_1$ be a simple direct summand of the top of $W_0:=\cG(X)$ and set $Z_1:= \cG(\tau(Y_1))$. By assumption $Y_1$ is the top of $Z_1$ and $Z_1$ is the only simple object of $\cD$ with this property. Thus the only possible standard composition factor having $Y_1$ as top is $Z_1$.  Let $Y_2$ be a simple direct summand of  the top of $W_1 := \ker(\cG(X) \rightarrow Z_1)$ and proceed inductively. This provides a standard composition series. Since in each step $\text{top}(W_i)/Y_{i+1}$ is contained in the top of $W_{i+1}$ and since each standard composition factor is uniquely determined by its top this procedure is unique up to ordering.
\end{proof}
The restriction functor maps the set $\Irr(\cD)$ to the set of basic standard modules. By assumption \ref{assumption} point (4) this map needs to be injective and by definition of basic standard this map is surjective, hence
\begin{corollary}\label{cor:JH}
Let $X, Y \in \textup{Obj}(\cC_A)$, s.t. $\cG(X) \cong \cG(Y)$, then $X, Y$ have equivalent Jordan-H\"older series. 
\end{corollary}
Let $K(\cC_{\text{typ}})$ be the $\ZZ$-span of basic standard modules and define a map from standard modules to $K(\cC_{\text{typ}})$, mapping $X$ to $[X]_\typ$ where $[X]_\typ$ is the sum of basic standard composition factors of $X$. By Proposition \ref{prop:typcomp} this is well-defined and by Corollary \ref{cor:typprod} the tensor product in $\cC$ induces a ring structure on it.

Let $K(\cD)$ be the Grothendieck ring of $\cD$. For an object $N$ in $\cD$ define a new product $\cdot_N : K(\cD) \times K(\cD) \rightarrow K(\cD), ([X], [Y]) \mapsto [N \otimes_A X \otimes_A Y]$. Denote $K(\cD, N)$ the ring $K(\cD)$ with product $\cdot_N$. 

For the remainder of the proof we set $N := \cF(A)$.
\begin{proposition}
 The map
\[
\varphi: K(\cC_{\typ}) \rightarrow K(\cD, N), \qquad [\cG(X)]_\typ \mapsto [X]
\]
is a ring isomorphism.
\end{proposition}
\begin{proof}
Recall that Theorem 3.14 of \cite{CMSY24} applies thanks to assumption \ref{assumption} and thus $\cC_A$ is rigid. 

 Let $X, Y$ be objects in $\cC_A$, such that $\cG(X) \cong \cG(Y)$, then by Corollary \ref{cor:JH} $X$ and $Y$ have equivalent Jordan-H\"older series and in particular their images in 
$K(\cD)$ coincide $[X] \cong [Y]$. This map is a ring homomorphism from the split Grothendieck ring of $\cC_A$ to $K(\cD)$, since $\cC_A$ is rigid.
Since basic standard modules are in one-to-one correspondence with simple objects in $\cD$, the map $\varphi$ is bijective. We need to show that it is a homomorphism of rings. 

We apply Proposition \ref{prop:CLR} several times. First note that the case $X = A$ says 
 that $\cG(\cF(\cG(Z))) \cong A \otimes_V \cG(Z) \cong \cG(\cF(A) \otimes_A Z) = \cG(N \otimes_A Z)$ for any object $Z$ in $\cC_A$. Hence
 $ [\cF(\cG(Z))] \cong [N \otimes_A Z]$.
 Consider two standard modules $\cG(X), \cG(Y)$  as well as $\cG(X) \otimes_V \cG(Y) \cong \cG(\cF(\cG(X)) \otimes_A Y)$. 
Hence
\begin{equation} \nonumber
\begin{split}
[\cG(X) \otimes_V \cG(Y)]_\typ &\cong [\cG((\cG(X) \otimes_V \cG(Y), m_{\cG(X) \otimes Y}))]_\typ 
\cong [\cG(\cF(\cG(X)) \otimes_A Y)]_\typ \\
&\cong  \varphi^{-1}\left([\cF(\cG(X)) \otimes_A Y]  \right) 
\cong  \varphi^{-1}\left([ N \otimes_A  X \otimes_A Y]  \right) 
\cong  [\cG\left( N \otimes_A  X \otimes_A Y  \right)]_\typ.
\end{split}
\end{equation}

\vspace{-5mm}
\end{proof} 
\begin{proposition}
    $\rho(g)$ in $\mathbb Z$ for $g \in \textup{supp}(N)$.
\end{proposition}
\begin{proof}
    Since $\cC$ is of finite length and $A$ is an object in $\cC$ also $A \otimes_V A$ is an object in $\cC$ and hence of finite length. Especially $N$ must be of finite length in $\cC_A$ and thus $\text{supp}(N)$ is finite. Assume that the statement of the Proposition is false. We will see that this is impossible. That is assume there exists $g \in \text{supp}(N)$
    with $\rho(g) \notin \mathbb Z$ and choose $g$ minimal with that property, that is $h \in \text{supp}(N)$ with $\rho(h) \notin \mathbb Z$ implies that $\rho(h) \geq \rho(g)$. 

    Apply the induction functor (it is exact since $\otimes_V$ is exact) to the strictly $\rho$-ordered resolution $V_\bullet$ of $V$ to get a resolution of $A$ in $\cC_A$. 
\begin{equation}\nonumber
\cdots \cF(V_3) \rightarrow \cF(V_2) \rightarrow \cF(V_1) \rightarrow \cF(V_0) \rightarrow A \rightarrow 0.
\end{equation}
Let $V_i^A$ be such that $\cG(V_i^A) = V_i$.
Applying the restriction functor  and recalling that $\cG(\cF(V_i)) \cong \cG(N \otimes_A V_i^A)$ gives  a standard resolution $A_\bullet$ of $A$
\begin{equation}\label{eq:Abullet}
A_\bullet = \qquad\qquad \cdots \cG(N \otimes_A V^A_3) \rightarrow  \cG(N \otimes_A V^A_2) \rightarrow  \cG(N \otimes_A V^A_1) \rightarrow  \cG(N \otimes_A V^A_0) \rightarrow  A \rightarrow 0.
\end{equation}
Since $d(N \otimes_A V^A_i) = d(N) + d(V_i)$ and $d(V_{i+1}) > d(V_i)$ also $d(N \otimes_A V^A_{i+1})>d(N \otimes_A V^A_i)$. 
In particular $A_\bullet$ must be of finite multiplicity. Since it is a resolution of $A$ the index satisfies $I_{A_\bullet}(X) = 0$ for $X \in T\setminus \{A\}$.

Since $\cC_A$ is rigid $X \otimes_A Y$ is non-zero for any two non-zero objects $X, Y$ in $\cC_A$.
It follows that $N \otimes_A V^A_i$ has a standard (non-zero) composition factor, call it $Z_i$, with $\rho(Z) = d(V_i) + \rho(g) \in \rho(g) + \mathbb Z$ and by the minimality assumption on $g$ any other composition factor $Y_i$ of $N \otimes_A V_i$ with $d(Y_i) \in \rho(g) + \mathbb Z$ has to satisfy $d(Y_i) \geq \rho(g)$. In particular $Z_0$ cannot be a composition factor of 
$N \otimes_A V^A_i$ for $i>0$. Thus $I_{A_\bullet}(Z_0) \neq 0,$ a contradiction.
\end{proof}
\begin{corollary}\label{cor:supp} ${}$
    \begin{enumerate}
        \item For $X, Y$ in $T$, $\textup{supp}(X \otimes_V Y) \subset d(X) + d(Y) + \mathbb Z$.
        \item For $X$ in $\textup{Irr}(\cC)$ and $Y$ in $T$, $\textup{supp}(X \otimes_V Y) \subset d(\tau(X)) + d(Y) + \mathbb Z$.
        \item For $X$ in $\textup{Irr}(\cC)$ and $Y$ in $T$, $\textup{supp}(P_X \otimes_V Y) \subset d(\tau(X)) + d(Y) + \mathbb Z$, where $P_X$ is the projective cover of $X$.
        \item For $X$ in $\textup{Irr}(\cC)$ and $Y$ in $T$, $\textup{supp}(M \otimes_V Y) \subset d(\tau(X)) + d(Y) + \mathbb Z$, where $M$ is in the same block as $X$.
        \end{enumerate}
    \end{corollary}
\begin{proof}
    The first statement follows from the previous proposition together with $\cG(X) \otimes_V \cG(Y) \cong \cG(N \otimes_A X \otimes_A Y)$.

    The second statement follows from the first one and exactness of $\otimes_V$, that is $\cG(\tau(X)) \otimes_VY \rightarrow X \otimes_V Y \rightarrow 0$ is exact. 

    For the third statement choose an $Z \in T$ that is projective as an object in $\cC$
    and $Z'$ is as well. Then $Z \otimes_V  Z' \otimes_V X$ is projective and surjects onto $P_X$. The claim follows since $\text{supp}(Z \otimes_V \otimes Z' \otimes_V X) \subset d(Z) + d(Z') +d(\tau(X)) + \mathbb Z = d(\tau(X)) + \mathbb Z$.

    For the last statement consider $0 \rightarrow Z \rightarrow M \rightarrow X \rightarrow 0$ non-split and $X, Z$ simple in $\cC$. Since $\text{Hom}_\cC(P_X, \bullet)$ is exact there must be a non-zero morphism from $P_X$ to $M$ and since $Z$ has to embed into the image one has $\text{supp}(Z \otimes_VY) \subset d(\tau(X)) + d(Y) + \mathbb Z$ for any $Y$ in $T$. By exactness also $\text{supp}(M \otimes_VY) \subset d(\tau(X)) + d(Y) + \mathbb Z$ for any $Y$ in $T$. The claim follows.
    \end{proof}

We prove the first statement of our Theorem.
\begin{proposition}
    Every indecomposable object $Y$ in $\cC$ has a $\rho$-ordered standard resolution $Y_\bullet$ that admits a quantum dimension. The total complex of $X_\bullet \otimes_V Y_\bullet$  for any two $\rho$-ordered resolutions $X_\bullet, Y_\bullet$ is also $\rho$-ordered.
\end{proposition}
\begin{proof}
    Let $Y$ be an indecomposable object in $\cC$. We then have the resolution $Y_\bullet = Y \otimes_V V_\bullet$ of $Y$. Since standard modules form a tensor ideal, Corollary \ref{cor:typprod}, this is a standard resolution.  The terms of the resolution are $Y_i = Y \otimes_V V_i$ and since $Y \otimes_V V_i \cong \cG(\cF(Y) \otimes_A V_i^A)$ one has $d(Y_i) = d(\cF(Y)) + d(V_i)$ and hence $d(Y_{i+1})> d(Y_i)$. The support of each $Y_i$ lies in $d(Y) + \mathbb Z$ by Corollary \ref{cor:supp}. Since $\cG(\cF(Y_i)) \cong \cG(N \otimes_A Y^A_i)$ and since $q_{N \otimes_A Y^A_i}^A = q_N^A q_{Y_i}^A$ it follows that $z^{Y_0}q^A_{Y_\bullet}(t, z) = q_N(z)q^{V_0}q_{V_\bullet}(r, z)$ and so $Y_\bullet$ admits a quantum dimension.  

    The last statement is clear since $X_i \otimes_V Y_j \cong \cG(N \otimes_A X_i^A \otimes_A  Y_j^A)$ implies $d(X_i \otimes_V Y_j) = d(N) + d(X_i) + d(Y_j)$.
\end{proof}
The second statement of the Theorem follows.
\begin{corollary}
    The quantum dimension factors through the Grothendieck ring of $\cC$, that is for any short exact sequence $0 \rightarrow X \rightarrow Y \rightarrow Z \rightarrow 0$ in $\cC$ one has $q^A_Y = q_X^A  + q_Z^A$.
\end{corollary}
\begin{proof}
Consider a non-split short exact sequence $0 \rightarrow X \rightarrow Y \rightarrow Z \rightarrow  0$. Tensoring this with  $V_\bullet$ of $V$ gives a short exact sequence of 
standard resolutions $0 \rightarrow X_\bullet \rightarrow Y_\bullet \rightarrow Z_\bullet \rightarrow  0$, where $X_\bullet = X \otimes_V V_\bullet, Y_\bullet = Y \otimes_V V_\bullet, Z_\bullet = Z \otimes_V V_\bullet$. 
Hence $q_{Y_\bullet}(t, z) = q_{X_\bullet}(t, z)+q_{Z_\bullet}(t, z)$ and in particular $q_Y = q_Z + q_Z$.
\end{proof}


Via the ring homomorphism $\varphi$, 
$q_N^A  q_X^A q_Y^A = q^A_{X \otimes_V Y}$
for standard indecomposable modules $X, Y$. In general for two $\rho$-ordered standard resolutions $X_\bullet, Y_\bullet$ that admit quantum dimensions 
\begin{equation}\label{eq:qdimhomres}
q_N^A(z)  q_{X_\bullet}^A(t, z) q_{Y_\bullet}^A(t, z) = q^A_{\text{Tot}(X_\bullet \otimes_V Y_\bullet)}(t, z)
\end{equation}
and in particular $\text{Tot}(X_\bullet \otimes_V Y_\bullet)$ admits a quantum dimension as well.
The resolution \eqref{eq:Abullet} satisfies all the assumptions of Proposition \ref{prop:limit} and  so  
\[
1 = q_A^A = \lim_{t \rightarrow 1^-} \lim_{z \rightarrow 1^-} q_N^A q_{V_\bullet}^A = q_N^Aq_V^A.
\]
This and taking the limits $t, z \rightarrow 1^-$ of \eqref{eq:qdimhomres} gives
\begin{corollary}${}$
\begin{enumerate}
    \item $q_N^Aq_V^A = 1$, in particular $q_V^A(M) \neq 0$ for all $M \in T\setminus E$.
    \item For  $X, Y$ in $\text{Obj}(\cC)$,
    $q_N^A q_X^A q_Y^A = q^A_{X \otimes_V Y}$.
\end{enumerate}
\end{corollary}
Set $q_X = \frac{q_X^A}{q_V^A}$ then the algebra of quantum dimension follows
\begin{equation}\nonumber
    \begin{split}
 q_X q_Y &= \frac{q_X^A}{q_V^A} \frac{q_Y^A}{q_V^A} 
 = \frac{q^A_N q^A_X q^A_Y}{q_V^A} 
 = \frac{q^A_{X \otimes_V Y}}{q_V^A} 
 = q_{X \otimes_V Y}.
    \end{split}
\end{equation}
\vspace{-10mm}

\end{proof}

\vspace{2mm}

\section{Standard Verlinde's formula of characters}

We now want to replace the algebra of quantum dimensions by some analytic expression. For this we extend Definition \ref{def:Verss}.
Let $\ch[X]= \tr_X(q^{L_0-\frac{c}{24}})$ be the character of an object $X$ in $\cC$ (and if these characters are not linearly independent then also graded in addition by some Jacobi variable or refined by an insertion of a zero-mode $o(v)$). 

\begin{definition}\label{def:Verss2} 
Let $A$ be a vertex operator algebra and $\cD$ a vertex tensor category of $A$-modules.
Assume
\begin{enumerate}
\item $\cD$ is semisimple and rigid.
\item The set of isomorphism classes of simple objects, $T := \text{Irr}(\cD)$, is a  measure space with measure $\mu$.
\item  There exists a function $S: T \times T \rightarrow \CC, (X, Y) \mapsto S_{X, Y}$, the $S$-kernel, s.t. $S_{A, Y} \neq 0$ for any $Y \in T$.
Extend the notion to direct sums as before, that is $S_{W \oplus X, Y} = S_{W, Y} + S_{X, Y}$.
\item $\ch[X]$ converges to a function on $D^1 = \{ q \in \mathbb C | |q| <1 \}$ for all $X$ in $T$, so that 
$$\ch: T \times D^1: \mathbb C, \qquad (X, q) \mapsto \ch[X](q), $$ becomes a function on $T \times D^1$.
\item Let $V^\ch = \text{span}_{\mathbb C}\{ \ch[X] | X \in \text{Obj}(\cD) \}$.
A function $f: T \times T \rightarrow \mathbb C$ is called a convolution kernel if 
\[
\int_T \mu(Y) \left(\int_T \mu(Z) f(Y, Z) \ch[Z]\right)
\]
exists and is in $V^\ch$. Assume that the functions $I_XY, Z) = S^{*}_{X, Y}  S_{Y, Z}$, $I^*_X(Y, Z) = S_{X, Y}  S^{*}_{Y, Z}$ and $N_{U, X}(Z, W) =  \frac{S_{U, Z}  S_{X, Z} S^{*}_{W, Z}}{S_{A, Z}}$ are convolution kernels for all  $U, X \in \text{Obj}(\cD)$, that is 
 for all $U, X \in \text{Obj}(\cD)$ and $q \in D^1$, the integrals 
 \begin{equation} \nonumber
\begin{split}
&\int_{T} \mu(Y) \left(\int_{T} \mu(Z)\   S^{*}_{X, Y}  S_{Y, Z} \ch[Z]  \right), \qquad \int_{T} \mu(Y) \ \left(\int_{T} \mu(Z)   S_{X, Y}  S^{*}_{Y, Z} \ch[Z]  \right),  \\
&\int_{T} \mu(Z) \left(\int_{T} \mu(W)\   \frac{S_{U, Z}  S_{X, Z} S^{*}_{W, Z}}{S_{A, Z}} \ch[W]  \right) 
\end{split}
\end{equation}
exist and are in $V^\ch$.
\end{enumerate}
Then
\begin{enumerate}
\item[(a)] 
The $S$-kernel is called {\bf unitary} if for all $X \in \text{Obj}(\cD)$
\begin{equation}\label{eq:unitary}
\begin{split}
\int_{T} \mu(Y) \left(\int_{T} \mu(Z)\   S^{*}_{X, Y}  S_{Y, Z} \ch[Z]  \right)  &=  \ch[{X}] =
\int_{T} \mu(Y) \ \left(\int_{T} \mu(Z)   S_{X, Y}  S^{*}_{Y, Z} \ch[Z]  \right)  
\end{split}
\end{equation}
\item[(b)] The {\bf Verlinde algebra of characters} is defined to be for $X, Y$ in $X \in \text{Obj}(\cD)$
 \[
\ch[X] \times_A \ch[Y] := \int_{T} \mu(Z) \left(\int_{T} \mu(W)\   \frac{S_{X, Z}  S_{Y, Z} S^{*}_{Z, W}}{S_{A, Z}} \ch[W]  \right)  .
\]
\end{enumerate}
\end{definition}

We note that the set $E$ in Definiton \ref{def:resolvedqdim} can now  be replaced by a set of measure zero. 
If $\cD$ admits a semisimple Verlinde algebra of quantum dimensions, that is
\[
\frac{S_{X, Z}}{S_{A, Z}} \frac{S_{Y, Z}}{S_{A, Z}} = \frac{S_{X \otimes_A Y, Z}}{S_{A, Z}} = \sum_{W \in T} F_{X, Y}^{\ \ W}\  \frac{S_{W, Z}}{S_{A, Z}}
\]
with $F_{X, Y}^{\ \ W}$ the fusion rules in $\cD$, 
\[
X \otimes_A Y \cong \bigoplus_{W \in T} F_{X, Y}^{\ \ W} \ W. 
\]
Then the Verlinde algebra is identified with the fusion algebra as follows:
\begin{equation}
    \begin{split}
      \ch[X] \times_A \ch[Y] &= \int_{T} \mu(Z) \left(\int_{T} \mu(W)\   \frac{S_{X, Z}  S_{Y, Z} S^{*}_{Z,W}}{S_{A, Z}} \ch[W]  \right) \\
      &= \int_{T} \mu(Z) \left(\int_{T} \mu(W)\  \sum_{U \in T} F_{X, Y}^{\ \ U}\  {S_{U, Z}}{S^{*}_{Z, W}} \ch[W]  \right)\\
      &= \int_{T} \mu(Z) \left(\int_{T} \mu(W)\   {S_{X \otimes_A Y, Z}}{S^{*}_{Z, W}} \ch[W]  \right)\\
      &= \ch[X \otimes_A Y].
    \end{split}
\end{equation}


Let now $X$ be a standard module, then by Proposition \ref{prop:typcomp} it has up to equivalence a unique standard composition series
and so $\ch[X]$ is the sum of the characters of its composition factors. 
The {\bf Verlinde algebra of characters} is defined to be for $X$ a standard module and $Y$ any object in $\cC$.
\begin{equation}\label{eq:verlinde}
\begin{split}
\ch[X] \times_V \ch[Y] := \int_{T} \mu(Z) \left(\int_{T} \mu(W)\   \frac{S_{X, Z}  S_{Y, Z} S^{*}_{Z, W}}{S_{V, Z}} \ch[W]  \right)  .
\end{split}
\end{equation}
\begin{corollary}\label{cor:Ver} 
We keep the above set-up, in particular we retain assumption \ref{assumption}, the ones of Definition \ref{def:Verss2} and assume that
the $S$-kernel is unitary.
 
Then the Verlinde formula holds for standard modules, that is 
\[
\ch[X] \times_V \ch[Y]  = \ch[X\otimes_V Y] 
\]  for all standard modules $X$ and all obejcts $Y$ in $\cC$. 
\end{corollary}
\begin{proof} 
Let $X$ be a standard module and $Y$ be any object in $\cC$. 
By  Corollary \ref{cor:typprod} the tensor product $X \otimes_V Y$ is a standard module. 
By Theorem \ref{thm:Ver} $\cC$ admits a Verlinde algebra of quantum dimensions 
\[
\frac{S_{X, Z}}{S_{V, Z}}  \frac{S_{Y, Z} }{S_{V, Z}} = \frac{S_{X \otimes_V Y, Z}}{S_{V, Z}}.
\]
Hence
\begin{equation}\nonumber
\begin{split}
\ch[X] \times_V \ch[Y] &= \int_{T} \mu(Z) \left(\int_{T} \mu(W)\   \frac{S_{X, Z}  S_{Y, Z} S^{*}_{Z, W}}{S_{V, Z}} \ch[W]  \right)  \\
&= \int_{T} \mu(Z) \left(\int_{T} \mu(W)\   S_{X \otimes_V Y, Z}   S^{*}_{W, Z} \ch[W]  \right)  =   \ch[X \otimes_V Y].
\end{split}
\end{equation}
\end{proof}

\section{Examples}

In this section we discuss the two main examples of the logarithmic Verlinde formula in the literature, namely the cases of the singlet algebras and of the affine VOA of $\sltwo$ at admissible levels. In order to do so we first need to discuss some semisimple examples. 

\subsection{Semisimple Examples}\label{sec:examples}

\subsubsection{The Virasoro algebra}

Let $V= \Vir_{k}$ be the simple Virasoro vertex algebra at central charge $c_k = 13 - \frac{6}{k+2} - 6(k+2)$ and let $k = -2 + \frac{u}{v}$ be a non-degenerate principal admissible level for $\sltwo$, that is $u, v \in \ZZ_{>1}$ and $(u, v) =1$. Then $\Vir_k$ is rational \cite{W} and its simple modules are denoted by $L_{r, s}$ with $1 \leq r \leq u-1, 1 \leq s \leq v-1$ and $L_{r, s} \cong L_{r', s'}$ if and only if either $(r', s') = (r, s)$ or $(r', s') = (u-r, v-s)$. Set 
$\vfuscoeff{(r,s) (r',s')}{(r'',s'')} =  \vpfuscoeff{r, r'}{r''}{u} \vpfuscoeff{s, s'}{s''}{v}$,
where
\begin{equation} \label{eqn:VirFusCoeff}\nonumber
\vpfuscoeff{t, t'}{t''}{w} = 
\begin{cases}
1 & \text{if \(\abs{t-t'}+1 \leqslant t'' \leqslant \min \set{t+t'-1,2w-t-t'-1}\) and \(t+t'+t''\) is odd,} \\
0 & \text{otherwise.}
\end{cases}
\end{equation}
Then the Virasoro fusion rules are given by the formula
\[
L_{r, s} \otimes L_{r', s'} \cong \bigoplus_{r'' =1}^{u-1} \bigoplus_{s'' =1}^{v-1}\vfuscoeff{(r,s) (r',s')}{(r'',s'')} L_{r'', s''}
\]
The modular $S$-matrix coefficients are 
\begin{equation} \label{eqn:SVir}\nonumber
S_{(r,s) (r',s')} = -2 \sqrt{\frac{2}{uv}} \brac{-1}^{rs'+r's} \sin \frac{v \pi r r'}{u} \sin \frac{u \pi s s'}{v}
\end{equation}
and Verlinde's formula in this instance is
\[
\vfuscoeff{(r,s) (r',s')}{(r'',s'')} = \sum_{(R, S)} \frac{S_{(r,s) (R,S)} S_{(r',s') (R,S)} S^*_{(r'',s'') (R,S)}}{S_{(1, 1) (R,S)}}
\]
where the sum is over the set that labels inequivalent simple modules. 

\subsubsection{The Heisenberg VOA and their extensions}

The vertex tensor category structure of the Heisenberg VOA has been first stated as a Theorem in \cite[Thm.2.3]{CKLR}. By now \cite[Thm. 3.6]{CY}\cite[Thm.2.3]{Mc-mirror} immediately guarantee this. 
Consider a rank $n$ Heisenberg vertex algebra and choose generators $h_1, \dots, h_n$. Then their OPE
\[
h_i(z)h_j(w) = \frac{M_{i, j}}{(z-w)^2}
\]
defines a symmetric $n \times n$-matrix $M$, which we assume to be non-degenerate. We then denote the Heisenberg VOA by $\pi^M$ to indicate our choice, but of course $\pi^M \cong \pi^{M'}$ for any other non-degenerate symmetric $n \times n$-matrix $M'$. 
Let $v_1, \dots, v_n$ be a standard basis of $\CC^n$ with inner product $(v_i, v_j) = \delta_{i, j}$. Then we define the $\CC^n$-valued Heisenberg field $h(z) = h_1(z)v_1 + \dots + h_n(z)v_n$. Fix an element $b \in \CC^n$, then $b$ defines a Virasoro field
\[
L(z) = \frac{1}{2}:(h(z), M^{-1}h(z)): + \left(b, M^{-1} \frac{d}{dz}h(z)\right)
\]
of central charge
$c = n -12(b, M^{-1}b)$.
The simple modules of $\pi^M$ are Fock modules. 
Let $\lambda \in \CC^n$. Then the Fock module $\pi^M_\lambda$ is generated by a highest-weight vector $v_\lambda$ on which the zero-mode of $h_i$ acts by multiplication with $\lambda_i$. The conformal weight of the top level of $\pi^M_\lambda$ is
\[
h_\lambda = \frac{1}{2}(\lambda, M^{-1} \lambda) + (b, M^{-1} \lambda). 
\]
The fusion product of two Fock modules is $\pi^M_\lambda \otimes \pi^M_\mu \cong \pi^M_{\lambda+\mu}$, which follows from \cite{FZ92} as $\pi^M$ is the affine vertex algebra of the abelian Lie algebra $\CC^n$ at non-degenerate level. The Fock modules are simple and the only extensions that they admit are self-extensions. Since any Fock module is $C_1$-cofinite, since any self-extension is of finite length and since they are closed under the contragredient dual, the category of $C_1$-cofinite modules of $\pi^M$ form a vertex tensor category \cite[Thm. 3.6]{CY}\cite[Thm.2.3]{Mc-mirror}. We are only interested in the semisimple vertex tensor subcategory of Fock modules, which we now denote by $\cC^\pi$, i.e. we don't consider the self-extensions. This category is just $\Vect_{\CC^n}^Q$ where $Q = ( \cdot, M^{-1} \cdot)$. In particular this means that the braiding of $\pi^M_\lambda \otimes \pi^M_\mu$ is just $e^{\pi i (\lambda, M^{-1} \mu)}$ times the identity on $ \pi^M_{\lambda+\mu}$. The choice of $b$ is really a choice of duality structure on $\cC^\pi$, see \cite{Allen:2021kay}, and if we set $b=0$, then this duality is in fact a ribbon structure. In particular in this case one can take the trace of the double braiding, i.e. the Hopf link and this is just $e^{2\pi i (\lambda, M^{-1}\mu)}$. This can also be recovered from modular transformations of characters.  
The character of a Fock module is
\begin{equation}\nonumber
\ch[\pi^M_\lambda](q, z) = \tr_{\pi_M}\left( q^{L_0 - \frac{c}{24} e^{2\pi i (u, h_0)}}\right) = \frac{q^{\frac{1}{2}(\lambda+b, M^{-1}(\lambda+b))} z^\lambda}{\eta(q)^n}
\end{equation}
with $z^\lambda = e^{2\pi i (u, \lambda)}$ and $u \in \CC, \tau \in \HH$.
We assume that $M$ is positive definite. The general Gaussian integral says that for such a matrix $M$ and $d \in \CC^n$
\[
\int_{\RR^n} e^{-\frac{\alpha}{2}(w, M^{-1} w) + (d, w)} d^nw = \sqrt{\frac{(2\pi)^n}{\alpha \det M^{-1}}} e^{\frac{1}{2\alpha}(d, Md)}
\]
for $\alpha \in \CC$ with $\Re(\alpha) > 0$. Note the modular transformation of the Dedekind eta-function $\sqrt{-i\tau} \eta(\tau) = \eta(-1/\tau)$. Assume that $b \in \RR^n$. From this one computes
\begin{equation}\nonumber
\begin{split}
\int_{\RR^n} \ch[\pi^M_\lambda](q, z) e^{-2\pi i (\lambda + b, M^{-1}(\mu +b))} d\lambda &= 
\frac{e^{-2\pi i (u, b)}}{\eta(\tau)^n} \int_{\RR^n} e^{\pi i \tau (\lambda+b, M^{-1}(\lambda+b))} e^{2\pi i (\lambda + b,  u + M^{-1}(\mu + b))} d\lambda  \\
&=\frac{e^{-2\pi i (u, b)}}{\eta(-1/\tau)^n}  \sqrt{\det M} e^{-\frac{\pi i }{\tau}\left(u-M^{-1}(\mu+b), Mu-(\mu+b) \right)} \\
&=  e^{-\frac{\pi i }{\tau} (u, Mu)} e^{-2\pi i (u, b)} e^{\frac{2\pi i}{\tau} (u, b)}       \sqrt{\det M}     \ch[\pi^M_\mu]\left( -\frac{1}{\tau}, \frac{u}{\tau} \right) 
\end{split}
\end{equation}
so up to the usual automorphy factor $e^{-\frac{\pi i }{\tau} (u, Mu)} e^{-2\pi i (u, b)} e^{2\pi i (u, b)}$ for the Jacobi-like variable $z$ the modular $S$-transformation is described by the $S$-kernel 
\[
S_{\lambda, \mu} := \frac{e^{-2\pi i (\lambda + b, M^{-1}(\mu +b))}}{ \sqrt{\det M} }.
\]
The above transformation formula holds for any $\lambda \in \RR^n, \mu \in \CC^n$. The quantum dimension is 
\[
q_\lambda(\rho) := \frac{S_{\lambda, \rho}}{S_{0, \rho}}  = e^{-2\pi i (\lambda, M^{-1}(\rho +b))}
\]
Let $\cC^\pi_\RR$ be the subcategory of Fock modules that have real weight. 
Let $Q$ be the linear span of the functions $\{q_\lambda(\rho) | \lambda \in \RR^n\}$. Clearly the $q_\lambda(\rho) $ are linearly independent. 
$Q$ satisfies the quantum dimension algebra
\begin{equation}\nonumber
\begin{split}
q_\lambda(\rho)q_\mu(\rho) =\frac{S_{\lambda, \rho}}{S_{0, \rho}}  \frac{S_{\mu, \rho}}{S_{0, \rho}}  =  e^{-2\pi i (\lambda + \mu, M^{-1}(\rho +b))} = \frac{S_{\lambda+\mu, \rho}}{S_{0, \rho}} = q_{\lambda+\mu}(\rho)
\end{split}
\end{equation}
that is for each $\rho$, the normalized $S$-kernel defines a one-dimensional representation of the tensor ring of $\cC^\pi_\RR$, i.e. the map
$\pi^M_\lambda \mapsto q_\lambda(\rho)$ is a ring homomorphism from $K(\cC^\pi_\RR)$ to $Q$. We call $Q$ the algebra of quantum dimensions. 
We can analytically continue the $q_\lambda$ for all $\lambda \in \CC^n$ by setting $q_\lambda(\rho) = e^{-2\pi i (\lambda, M^{-1}(\rho +b))}$ and then our ring isomorphism extends to one between the complete Grothedieck ring and the analytically extended algebra of quantum dimensions.

For each $\lambda$, the $S$-kernel $q_\lambda(\rho)= \frac{S_{\lambda, \rho}}{S_{0, \rho}}$ defines a function in the variable $\rho$ and these are clearly linear independent and so the map $\pi^M_\lambda \mapsto q_\lambda(\rho)$ is an isomorphism of algebras.

 Let now both $\lambda, \mu \in \RR^n$ and fix $z \in \CC^n, q \in \HH$ and view the characters 
$\ch[\pi^M_\lambda] = \ch[\pi^M_\lambda](q, z)$ as functions in the variable $\lambda$. Since $q$ is in the unit disc these are clearly $L^1$-integrable functions. The $S$-kernel is unitary in the following sense
\begin{equation}\nonumber
\begin{split}
\int_{\RR^n} d\nu \left(\int_{\RR^n} d\mu\   S^{*}_{\lambda, \nu}  S_{\nu, \mu} \ch[\pi^M_\mu]  \right)  
&= \frac{1}{\det M}
\int_{\RR^n} d\nu \left(\int_{\RR^n} d\mu \  {e^{2\pi i (\nu + b, M^{-1}(\mu -\lambda))}} \ch[\pi^M_\mu]  \right) \\
&=  \frac{1}{\sqrt{\det M}}
\int_{\RR^n} d\nu \   F(\ch[\pi^M_{-(\nu+b)}]) e^{-2\pi i (\nu +b, M^{-1}\lambda) } \ = \ \ch[\pi^M_{\lambda}] \\ 
\int_{\RR^n} d\nu\ \left(\int_{\RR^n} d\mu   S_{\lambda, \nu}  S^{*}_{\nu, \mu} \ch[\pi^M_\mu]  \right)  &= \frac{1}{\det M}
\int_{\RR^n} d\nu \ \left(\int_{\RR^n} d\mu   {e^{-2\pi i (\nu + b, M^{-1}(\mu -\lambda))}} \ch[\pi^M_\mu]  \right) \\
&=  \frac{1}{\sqrt{\det M}}
\int_{\RR^n} d\nu\     F(\ch[\pi^M_{\nu+b}]) e^{-2\pi i (\nu +b, M^{-1}\lambda)} \  = \ \ch[\pi^M_{\lambda}] 
\end{split}
\end{equation}
with $F$ the Fourier transform, 
$F(f)(\mu) = \int_{\RR^n} d\nu\ f(\nu) e^{-2\pi i \mu \nu}$.
The Verlinde algebra of characters follows,
\begin{equation}\nonumber
\begin{split}
\ch[\pi^M_\lambda] \times_V \ch[\pi^M_\mu] &=  \int_{\RR^n} d\rho \left(\int_{\RR^n} d\nu\   \frac{S_{\lambda, \rho}  S_{\mu, \rho} S^{*}_{\nu, \rho}}{S_{0, \rho}} \ch[\pi^M_\nu]  \right) \\
&= \int_{\RR^n} d\rho \left(\int_{\RR^n} d\nu \  {e^{-2\pi i (\rho + b, M^{-1}(\lambda+ \mu -\nu))}} \ch[\pi^M_\mu]  \right)
= \ch[\pi^M_{\lambda+\mu}]. 
\end{split}
\end{equation}
Next we turn to a case where $M$ is still non-degenerate, but not positive definite anymore. This vertex algebra is called $\Pi(0)$, we reformulate \cite{Ad1}.

We start with the rank two Heisenberg vertex algebra associated to the matrix $M = \binom{0 \ 2}{ 2 \ 0}$ and denote the generators by $c(z), d(z)$. One also introduces the fields $\mu, \nu$ defined via $c = \frac{2}{k}(\mu-\nu), d =\mu +\nu$ and here $k \in \CC\setminus \{0\}$ is a parameter that will be used to define a family of Virasoro fields on $\Pi(0)$. The fields $\mu, \nu$ are orthogonal on each other and have OPEs
\[
\mu(z)\mu(w) = \frac{k/2}{(z-w)^2}, \qquad \nu(z)\nu(w) = -\frac{k/2}{(z-w)^2}.
\]
The conformal vector is chosen to be
\[
L = \frac{1}{2} :c(z)d(z): -\frac{1}{2}\frac{d}{dz} d(z) + \frac{k}{4}\frac{d}{dz} c(z).
\]
It has central charge $c = 6k+2$.  
Let $\pi_{a, b}$ be the Fock module on which the zero-mode of $c$ acts by multiplication with $a$ and the one of $d$ by multiplication with $b$. $\Pi(0)$ is then defined as the simple current extension
\[
\Pi(0) = \bigoplus_{n \in 2\ZZ} \pi_{0, n}.
\]
Let $\cD$ be the category of $\Pi(0)$-modules that are objects in the direct limit completion of $\cC_\pi$, see \cite{CMY} for the notion of direct limit completions of vertex tensor categories. This category can be described using vertex algebra extensions \cite{HKL, CKL, CKM, CMY}, which is particularly simple for simple current extensions. See \cite{CMY3} for a comparable example. The point is that there is an induction functor $\cF$, which maps the Fock-module $\pi_{a, b}$ to the not necessarily local $\Pi(0)$-module
\[
\bigoplus_{n \in 2\ZZ} \pi_{a, b+n}.
\]
Locality is decided by the monodromy with the $\pi_{0, n}$. This monodromy is $e^{\pi i a n}$ and so one gets a local module for any $b \in \CC$ while $a$ needs to be an integer.
It is convenient for applications to label these modules
\[
\Pi_\ell(\lambda) = \bigoplus_{n \in 2\ZZ} \pi_{a, b+n}, \qquad a = \ell,  b =2\lambda + \frac{k}{2}.
\]
The induction functor $\cF$ is monoidal and hence the fusion rules
\[
\Pi_\ell(\lambda)  \otimes \Pi_{\ell'}(\lambda') = \Pi_{\ell+\ell'}(\lambda+ \lambda')  
\]
hold. Also note the periodicity $\Pi_\ell(\lambda+1) \cong \Pi_\ell(\lambda)$. 
Set $h = -2\mu$\footnote{Here we choose a different sign then \cite{Ad1}. The reason is that with our convention formulae match with \cite{CreMod12, CR1}} and define the character of $\Pi_\ell(\lambda)$ as usual. It is computed in \cite[Prop. 4.5]{Ad1} for $\ell =-1$ and the general computation is basically the same. 
\[
\ch[\Pi_{-\ell-1}(\lambda)] = \tr_{\Pi_{-\ell-1}(\lambda)}(q^{L_0-\frac{c}{24}} z^{h_0}) = q^{\frac{\ell}{2}\left(k-2\lambda\right)+ \frac{k\ell^2}{4}} z^{k-2\lambda+k\ell} \frac{\delta(z^2q^\ell -1)}{\eta(q)^2}
\]
with $\delta(x-1)  = \sum_{n \in \ZZ} x^n$ the formal delta distribution. Assume that $k = -2 + \frac{u}{v}$ for $u, v \in \ZZ_{>1}$ co-prime. 
This character relates to the character of the module $\sigma^\ell(\TypMod{\lambda, \Delta_{r, s}})$ of the affine VOA of $\sltwo$ at admissible level $k$ as 
\[
\sigma^\ell(\TypMod{k-2\lambda, \Delta_{r, s}}) = \ch[\Pi_{-\ell-1}(\lambda)]  \ch[L^k_{r, s}].
\]

Modular transformations of this is quite subtle and were obtained for the $\sigma^\ell(\TypMod{k-2\lambda, \Delta_{r, s}}) $ in  \cite[Section 3.1]{CreMod12} \cite[Thm. 6]{CR1} by interpreting the delta distribution in a suitable way (recall that $z=e^{2\pi i u}, q = e^{2\pi i \tau}$)
\[
\delta(z^2q^\ell -1) = \sum_{m\in\ZZ} \delta(2u+\ell\tau -m)
\]
where $\delta(2u+\ell\tau -m)$ is a formal distribution satisfying $\delta(2u+\ell\tau -m)f(u, \tau) = \delta(2u+\ell\tau -m)f((m-\ell\tau)/2, \tau)= \delta(2u+\ell\tau -m)f(u, (m-2u)/\ell)$ (the last identity of course only if $\ell \neq 0$). In this way one interprets $\ch[\pi_\ell(\lambda)]$ as a power series in formal distributions with coefficients functions on $\tau, u$ and the parameters $\ell, \lambda$. Explicitly
\begin{equation}\label{eq:chpi} \nonumber
\begin{split}
\ch[\Pi_{-\ell-1}(\lambda)] &= q^{\frac{\ell}{2}\left(k-2\lambda\right)+ \frac{k\ell^2}{4}} z^{k-2\lambda+k\ell} \frac{\delta(z^2q^\ell -1)}{\eta(q)^2} 
= \frac{q^{\frac{\ell}{2}\left(k-2\lambda\right)+ \frac{k\ell^2}{4}} z^{k-2\lambda+k\ell}}{\eta(q)^2} \sum_{m \in \ZZ}\delta(2u+\ell\tau -m) \\
&= \frac{q^{ \frac{k\ell^2}{4}} z^{k\ell} }{\eta(q)^2} \sum_{m \in \ZZ}   q^{\frac{\ell}{2}\left(k-2\lambda\right)}   z^{k-2\lambda} \delta(2u+\ell\tau -m) 
= \frac{q^{ \frac{k\ell^2}{4}} z^{k\ell} }{\eta(q)^2} \sum_{m \in \ZZ}   e^{\pi i m (k-2\lambda)} \delta(2u+\ell\tau -m).
\end{split}  
\end{equation}
Modular properties of these distributions are studied in \cite{CR1}. One gets a projective  $SL(2, \ZZ)$-action due to the appearance of a phase corresponding to the argument of $\tau$. This phase however cancels in quantum dimensions and Verlinde's formula and so we will ignore it. Translated to our notation  \cite[Thm. 6]{CR1}  gives the $S$-kernel and normalized $S$-kernel, i.e. quantum dimension,
\[
S_{\Pi_\ell(\lambda), \Pi_{\ell'}(\lambda')} =  e^{-i \pi (k \ell\ell'-k +2\lambda'(\ell+1) +2\lambda(\ell'+1))} \qquad \text{and} \qquad  q_{\ell,\lambda}(\ell', \lambda') = 
\frac{S_{\Pi_\ell(\lambda), \Pi_{\ell'}(\lambda')}}{S_{\Pi(0), \Pi_{\ell'}(\lambda')}} = e^{-i \pi (k \ell\ell'+2\lambda' \ell +2\lambda(\ell'+1))}
 \]
The algebra of quantum dimension holds 
\begin{equation}
\begin{split}
q_{\ell,\lambda}(\ell', \lambda')  q_{m,\mu}(\ell', \lambda') 
 &= e^{-i \pi (k \ell\ell'+2\lambda' \ell +2\lambda(\ell'+1))}e^{-i \pi (k m\ell' +2\lambda' m+2\mu(\ell'+1))} \\ & = e^{-i \pi (k (\ell+m)\ell' +2\lambda' (\ell+m) +2(\lambda+\mu)(\ell'+1))} = q_{\ell + m,\lambda + \mu}(\ell', \lambda') 
\end{split}
\end{equation}
and we get a Verlinde formula by observing that with our interpretation \eqref{eq:chpi} the characters are power series with coefficients functions in $u, \tau, \lambda, \ell$ and they are one-periodic in $\lambda$. With this interpretation one can compute the unitarity of the $S$-kernel and the Verlinde algebra of characters. We only   compute the Verlinde algebra of characters
\begin{equation}\nonumber
\begin{split}
\ch[\Pi_\ell(\lambda)] \times_V \ch[\Pi_m(\mu)] &= \sum_{n\in \ZZ} \int_0^1 d\nu \sum_{r \in \ZZ} \int_0^1 d\rho\ \frac{ S_{\Pi_\ell(\lambda), \Pi_r(\rho)} S_{\Pi_m(\mu), \Pi_r(\rho)} S_{\Pi_n(\nu), \Pi_r(\rho)}^{*}}{S_{\Pi(0), \Pi_r(\rho)}} \ch[\Pi_n(\nu)] \\
&= \sum_{n\in \ZZ} \int_0^1 d\nu \sum_{r \in \ZZ} \int_0^1 d\rho\    e^{- i\pi(kr(\ell+m-n) +2\rho(\ell+m-n) +2(\lambda+\mu-\nu)(r+1))}      \ch[\Pi_n(\nu)] \\
&= \sum_{n\in \ZZ} \int_0^1 d\nu \sum_{r \in \ZZ} \  \delta_{\ell+m, n}    e^{- 2\pi i(\lambda+\mu-\nu)(r+1)}      \ch[\Pi_n(\nu)] \\
&= \sum_{n\in \ZZ} \int_0^1 d\nu  \  \delta_{\ell+m, n}  \delta(\lambda+ \mu-\nu)  e^{- 2\pi i(\lambda+\mu-\nu)}      \ch[\Pi_n(\nu)] \\
&= \sum_{n\in \ZZ}   \  \delta_{\ell+m, n}     \ch[\Pi_n(\lambda+\mu)] 
=    \ch[\Pi_{\ell+m}(\lambda+\mu)]. 
\end{split}
\end{equation}
Here we used that the delta-distribution for one-periodic functions is 
$\delta(x) = \sum\limits_{r \in \ZZ} \    e^{2\pi i rx}$.

\subsection{Non-semisimple Examples}

Presently there aren't many examples of non-semisimple categories of VOAs that are well-understood and here the singlet algebras and the affine VOA of $\sltwo$ at admissible levels will be discussed. 
Besides those, the well-studied VOAs of non semi-simple type are the triplet algebras  \cite{AdaTri08, TW}, the affine VOA of $\mathfrak{gl}_{1|1}$ \cite{CreRel11,CMY1}, the $\beta\gamma$-VOA \cite{Allen:2020kkt} and the $\mathcal B_p$-algebras \cite{CreCos13, CMY3}. All these are simple current extensions of a singlet algebra, respectively of a singlet algebra times a Heisenberg VOA. 

In our following discussion, we have to restrict weights so that expressions converge. It turns out that the $S$-kernel for resolutions converge as long as the imaginary part of weights is not positive and then for integration against $S$-kernels we in addition need to require (as before) that weights are real.

\subsubsection{Singlet Algebras}

The singlet algebras $\cM(p)$ for $p \in \ZZ_{\geq 2}$ have the realization $\cM(p) \hookrightarrow \pi$ with $\pi$ a rank one Heisenberg VOA. 
Note that the conformal vector of the singlet algebra coincides with a non-standard conformal vector of the singlet algebra, see e.g. \cite{CreFal13}.
This algebra is studied in \cite{Ad-singlet, CreFal13, CMY4, CMY6, CLR}. In particular the existence of vertex tensor category, rigidity, being locally finite and having enough projectives are all shown in \cite{CMY4, CMY6}.  Fock modules of the Heisenberg VOA allow for two interesting vertex tensor categories. We will consider the category $\cD$ whose objects are direct sums of Fock modules of real weight. One can enlarge this category to the category of finite-length $C_1$-cofinite modules, which then also includes self-extensions of arbitrary length of Fock modules. Correspondingly the singlet algebra has two categories and we consider the category $\cC$ whose analogue for complex weights  is denoted by $\mathcal O^T(\cM(p))$ in \cite{CMY6}. 
Maybe the simplest precise way to describe $\cC$ is as the category that is equivalent to the category of real weight modules of the small unrolled quantum group of $\sltwo$ at $q = e^{\frac{\pi i}{p} }$ via the logarithmic Kazhdan-Lusztig correspondence of \cite{CLR}.
This is precisely the category, so that $\cC_\pi^\text{loc}\cong \cD$. We already discussed that $\cD$ is semisimple and admits a semisimple Verlinde algebra of quantum dimensions.
That $\Irr(\cC_A) = \Irr(\cD)$ is shown in \cite{CLR}. The remaining assumptions are easily extracted from the literature. Firstly the Fock modules are parameterized by its highest-weight, a complex number $\lambda$. We denote this by $\pi_\lambda$ and write $\cG(\pi_\lambda) = F_\lambda$. The $F_\lambda$ are the basic standard modules. Set $\alpha_+ = \sqrt{2p}, \alpha_- = -\sqrt{2/p}$, $\alpha_0 = \alpha_+ + \alpha_-$. Set 
\[
\alpha_{r, s} = \frac{1-r}{2}\alpha_+ + \frac{1-s}{2}\alpha_-, \qquad F_{r, s} := F_{\alpha_{r, s}}, \qquad \pi_{r, s} := \pi_{\alpha_{r, s}}
\]
for $r \in \ZZ$ and $s =1, \dots, p$. The modules $F_\lambda$ for $\lambda \notin \{ \alpha_{r, s} | r \in \mathbb Z,  s = 1 , \dots,  p-1 \}$ are projective, injective and simple and hence the same is true for their duals. Otherwise they satisfy a non-split exact sequence
\[
0 \rightarrow M_{r-1, p-s} \rightarrow F_{r-1, p-s} \rightarrow M_{r, s} \rightarrow 0
\]
and $M_{r, s}$ for $r \in \ZZ$ and $s = 1, \dots, p-1$ are the only simple modules that are not standard. From this it is clear that basic standard modules are linearly independent. In addition the map $\tau: \Irr(\cC) \rightarrow \Irr(\cD)$ is defined by $\tau(F_\lambda) = \pi_\lambda$ on simple standard modules $F_\lambda$ and $\tau(M_{r, s}) = \pi_{r-1, p-s}$. 
The singlet VOA is the module $\cM(p) = M_{1, 1}$ and the algebra object is $A = F_{1, 1} = \cG(\pi)$. Splicing the exact sequences one gets a resolution in terms of standard modules
\begin{equation}\label{eq:res-singlet}
\cdots \rightarrow F_{r-5, p -s }  \rightarrow F_{r-4, s }  \rightarrow F_{r-3, p -s }  \rightarrow F_{r-2, s }  \rightarrow F_{r-1, p -s }  \rightarrow M_{r, s } \rightarrow 0.
\end{equation}
Since $\pi_\alpha \otimes_\pi \pi_\beta \cong \pi_{\alpha+\beta}$, it is obvious that the category of Fock modules with real weight is $\mathbb R$-graded and 
\[
\rho: \mathbb R \rightarrow \mathbb R, \qquad \lambda \mapsto -\frac{2}{\alpha_-} \lambda
\]
is a group homomorphism for which $F_{{r, s}}$ has degree $s-1-p(r-1)$. In particular 
the degree of the $n$-th term of the chain complex corresponding to \eqref{eq:res-singlet}
has degree $-(s+1)+p(n+3-r)$ if $n$ is even and $s-1+p(n+2-r)$ if $n$ is odd, that is these are strict $\rho$-ordered standard resolutions. 

The singlet algebra has more modules, denoted by $\overline{F}_{r, s}$ and fitting into the non-split exact sequence (see equation (34) of \cite{CLR})
\[
0 \rightarrow M_{r, s} \rightarrow \overline{F}_{r, s} \rightarrow M_{r-1, p-s} \rightarrow 0.
\]
Point (9) of Lemma 8.5 of \cite{CLR} for $\lambda = \alpha_{2, p-1} =\alpha_-$ says that $\cF(\overline{F}_{2, p-1})$ is a single indecomposable module with Loewy diagram
\[
\pi_0 \rightarrow \pi_{\alpha_-} \rightarrow \pi_{2\alpha_-} \rightarrow \cdots \rightarrow \pi_{(p-1)\alpha_-}.
\]
Since induction is exact (since $\cC$ is rigid) and since $\cF(M_{1, 1}) = \pi_0$ it follows that $\cF(M_{2, p-2})$ has Loewy diagram
\[
 \pi_{\alpha_-} \rightarrow \pi_{2\alpha_-} \rightarrow \pi_{3\alpha_-} \rightarrow \cdots \rightarrow \pi_{(p-1)\alpha_-}.
\]
Frobenius reciprocity says that
\[
\Hom_{\cC_A}(\cF(F_{1, 1}), \pi_{r, s}) = \Hom_\cC(F_{1, 1}, F_{r, s}) =  \begin{cases} \CC & \quad (r, s) \in \{ (1, 1), (2, p-1) \\ 0 & \quad \text{otherwise} \end{cases}
\]
since $F_{1, 1}$ has $M_{1, 1}$ as submodule and $M_{2, p-1} \cong F_{1, 1}/M_{1, 1}$ and by exactness of induction the only possibility is that $N= \cF(F_{1, 1}) = \pi_0 \oplus \cF(M_{2, p-1})$. 

Due to the unusual conformal vector (see \cite{CreFal13} for details), the $S$-kernel of the Heisenberg VOA is
$S_{\lambda, \mu} := S_{\pi_\lambda, \pi_{\mu}} = e^{-2\pi i \left( \lambda - \frac{\alpha_0}{2}\right)\left( \mu - \frac{\alpha_0}{2}\right)}$,
in particular
$S_{(r, s), \mu} := S_{\pi_{r, s}, \pi_\mu} = e^{-2\pi i (r\alpha_+ + s\alpha_-)\left( \mu - \frac{\alpha_0}{2}\right) }$.
Set $\zeta_\mu := e^{- \pi i \alpha_+\mu}$, so that 
$q^A_{(r-1-i, p-s)}(\mu + \frac{\alpha_0}{2}) = \zeta_\mu^i \zeta_\mu^{3-r} \zeta_\mu^{-\frac{s+1}{p}}$ and
$q^A_{(r-1-i, s)}(\mu + \frac{\alpha_0}{2}) = \zeta_\mu^i \zeta_\mu^{2-r} \zeta_\mu^{\frac{s-1}{p}}$.
 Then for $|t| <1$ and recall that $\mu \in \mathbb R$ the resolution \eqref{eq:res-singlet} tells us that
\begin{equation}\nonumber
\begin{split}
q^A_{M_{r, s}\bullet}(\mu + \frac{\alpha_0}{2})(t, z)
&=  \sum_{\substack{i = 0 \\ i \ \text{even}}}^\infty t^i z^{pi}   \zeta_\mu^{i+3-r -\frac{s+1}{p}}- \sum_{\substack{i = 0 \\ i \ \text{odd}}}^\infty t^i z^{2s + p(i-1)}  \zeta_\mu^{i+2-r +\frac{s-1}{p}} = \frac{\zeta_\mu^{3-r  -\frac{s+1}{p}}}{1 - \zeta_\mu^2 z^{2p}t^2} -  \frac{tz^{2s}\zeta_\mu^{3-r + \frac{s-1}{p}}}{1 - \zeta_\mu^2 t^2 z^{2p}} \\ &= \zeta_\mu^{3-r-\frac{1}{p}}  \frac{ \zeta_\mu^{-\frac{s}{p}} -  t z^{2s}  \zeta_\mu^{\frac{s}{p}}}{1 - \zeta_\mu^2 t^2 z^{2p}}. 
\end{split}
\end{equation}
Thus for $\mu \notin E := \frac{1}{\alpha_+}\mathbb Z$
\[
q^A_{M_{r, s}}(\mu + \frac{\alpha_0}{2}) = \lim_{t\rightarrow 1^-} \lim_{z\rightarrow 1^-}q^A_{M_{r, s}\bullet}(\mu + \frac{\alpha_0}{2})(t, z) = \lim_{z\rightarrow 1^-} \lim_{t\rightarrow 1^-}q^A_{M_{r, s}\bullet}(\mu + \frac{\alpha_0}{2})(t, z)
 =
 - \zeta_\mu^{2-r-\frac{1}{p}}
  \frac{\sin\left(\pi s \alpha_- \mu \right)}{\sin\left(\pi  \alpha_+ \mu \right)}.
\]
It follows that (recall that $V =\cM(p) = M_{1, 1}$)
\begin{equation}\nonumber
\begin{split}
q_{F_\lambda}(\mu+\frac{\alpha_0}{2}) = \frac{q^A_{F_\lambda}(\mu+\frac{\alpha_0}{2})}{q^A_{V}(\mu+\frac{\alpha_0}{2})} =  - \frac{\sin(\pi \alpha_+ \mu)}{\sin(\pi \alpha_- \mu)} e^{-2\pi i  \lambda\mu} \zeta_\mu^{\frac{1}{p}-1} \\
 q_{M_{r, s}}(\mu+\frac{\alpha_0}{2}) = \frac{q^A_{M_{r, s}}(\mu+\frac{\alpha_0}{2})}{q^A_{V}(\mu+\frac{\alpha_0}{2})} =
    \frac{\sin(\pi s\alpha_- \mu)}{\sin(\pi \alpha_- \mu)} \zeta_\mu^{1-r}
\end{split}
\end{equation}
These can be analytically continued from functions on $\mathbb R \setminus E$ to $\mathbb C \setminus E$.
 Set $\mu = i \epsilon$ and $q_\epsilon = e^{\pi \epsilon}$,
 then 
 \begin{equation}\label{eq:S-singlet-cat}
  q_{F_\lambda}(\mu+\frac{\alpha_0}{2}) = - \frac{\sin(\pi \alpha_+ \mu)}{\sin(\pi \alpha_- \mu)} q_\epsilon^{2\lambda- \alpha_0} \qquad \text{and} \qquad
   q_{M_{r, s}}(\mu+\frac{\alpha_0}{2})  =  \frac{\sin(\pi s\alpha_- \mu)}{\sin(\pi \alpha_- \mu)} q_\epsilon^{- \alpha_+ (r-1)}\ .
 \end{equation}
 
 Jointly with Antun Milas, we studied Verlinde's formula of the singlet algebra in the analytic setting as follows  \cite{CreFal13}.
 The characters of the simple modules $M_{r, s}$ are false theta functions. These don't have good modular properties and so we introduced a regularization parameter $\epsilon$ and studied modular transformations of these deformed characters. We were able to use those to define a ring structure, the Verlinde algebra, on the linear span of regularized characters and conjectured that this ring structure coincides with the one of the Grothendieck ring  \cite[Conjecture 26]{CreFal13}. 
 
 We then introduced regularized quantum dimensions 
  as the limit as the modular parameter $\tau$ approaches zero of the quotient of the regularized character of the module $M$ by the regularized one of the VOA $\cM(p)$, 
  \[
 \text{qdim}[M^\epsilon] = \lim_{\tau \rightarrow 0^+} \frac{\ch[M^\epsilon]}{\ch[M_{1, 1}^\epsilon]}.
 \]
 These are functions of the parameter $\epsilon$ and form an algebra under multiplication. We observed that this algebra is isomorphic to the Verlinde algebra that we defined \cite[Theorem 28]{CreFal13}.
 
 The regularized quantum dimensions are \cite[Proposition 27]{CreFal13}
\begin{equation}\nonumber
 \text{qdim}[F_\lambda^\epsilon]   = - \frac{\sin(\pi \alpha_+ \mu)}{\sin(\pi \alpha_- \mu)} q_\epsilon^{2\lambda- \alpha_0} \qquad \text{and} \qquad
  \text{qdim}[M_{r, s}^\epsilon] =  \frac{\sin(\pi s\alpha_- \mu)}{\sin(\pi \alpha_- \mu)} q_\epsilon^{- \alpha_+ (r-1)}
 \end{equation}
Comparing with \eqref{eq:S-singlet-cat} we observe that with $\mu= i\epsilon$
\[
 \text{qdim}[F_\lambda^\epsilon]   = q_{F_\lambda}(\mu+\frac{\alpha_0}{2}) \qquad \text{and} \qquad
  \text{qdim}[M_{r, s}^\epsilon] =  q_{M_{r, s}}(\mu+\frac{\alpha_0}{2}) .
 \]
 \begin{corollary}
 The Verlinde conjecture {\textup{\cite[Conjecture 26]{CreFal13}}} holds for the category $\mathcal C$ of $\mathcal M(p)$. 
 \end{corollary}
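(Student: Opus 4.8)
The plan is to obtain the Corollary as a direct application of Theorem~\ref{thm:Ver}. First I would check that Assumption~\ref{assumption} is satisfied for the data $V = \cM(p)$, $A = F_{1,1} = \cG(\pi)$, $\cC$ the real-weight analogue of $\mathcal{O}^T(\cM(p))$, and $\cD = \cC_A^{\textup{loc}}$. Almost every item has already been assembled in this subsection: $\cC$ is a vertex tensor category that is locally finite, has enough projectives and is rigid by \cite{CMY4, CMY6}; $\cD$ is semisimple and admits a semisimple Verlinde algebra of quantum dimensions by the Heisenberg computation; $\Irr(\cC_A) = \Irr(\cD)$ together with the one-to-one correspondence $\tau$ come from \cite{CLR}; a simple standard module $F_\lambda$ with $F_\lambda$ and $F_\lambda'$ both projective exists for generic $\lambda$; the extension structure of the $F_{r,s}$ and $M_{r,s}$ shows basic standard modules are linearly independent; and the category of real-weight Fock modules is $\mathbb{R}$-graded with $\rho(\lambda) = -\tfrac{2}{\alpha_-}\lambda$ a homomorphism to $(\mathbb{R},+)$. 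In particular $\cC_A$ is rigid by Theorem~3.14 of \cite{CMSY24}.

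Next I would verify the remaining hypothesis of Theorem~\ref{thm:Ver}, namely that $V = M_{1,1}$ admits a strict $\rho$-ordered standard resolution carrying a quantum dimension: this is exactly the splicing \eqref{eq:res-singlet} at $(r,s) = (1,1)$, which was noted above to be strict $\rho$-ordered, and the explicit computation of $q^A_{M_{r,s}\bullet}(t,z)$ specialised to $r = s = 1$ shows that the sum converges for $|t|,|z|<1$ to a rational function in $Q_A(t,z)$ whose two iterated limits agree off the exceptional set $E = \tfrac{1}{\alpha_+}\mathbb{Z}$. With Assumption~\ref{assumption} and this last input in hand, Theorem~\ref{thm:Ver} yields that $\cC$ admits a Verlinde algebra of quantum dimensions, that $K(\cC) \to Q$, $X \mapsto q_X$, is a ring homomorphism, and — once the $S$-kernel is unitary and the characters of objects of $T$ lie in $L^1(T,\mu)$ — that $\ch[X] \times_V \ch[Y] = \ch[X\otimes_V Y]$ for all $X,Y$ in $\cC$.

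Finally I would translate these statements into the language of \cite{CreFal13}. Setting $\mu = i\epsilon$ identifies, as displayed above, the categorical quantum dimensions $q_{F_\lambda}(\mu + \tfrac{\alpha_0}{2})$ and $q_{M_{r,s}}(\mu + \tfrac{\alpha_0}{2})$ with the regularized quantum dimensions $\textup{qdim}[F_\lambda^\epsilon]$ and $\textup{qdim}[M_{r,s}^\epsilon]$ of \cite[Proposition~27]{CreFal13}, and the $S$-kernel of the singlet's (non-standard) Heisenberg realization with the regularized modular $S$-transform used there. Consequently the ring homomorphism of Theorem~\ref{thm:Ver}(3) is precisely the isomorphism between the Grothendieck ring of $\cC$ and the algebra of regularized quantum dimensions, and the Verlinde algebra of characters coincides with the Verlinde algebra on the span of regularized characters introduced in \cite{CreFal13}. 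Since \cite[Conjecture~26]{CreFal13} asserts exactly that this latter Verlinde algebra agrees with the Grothendieck ring, the Corollary follows.

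The step I expect to be the main obstacle is the bookkeeping that makes this dictionary rigorous: confirming that the $\tau \to 0^+$ limit of quotients of regularized characters in \cite{CreFal13} corresponds to the iterated $t \to 1^-$, $z \to 1^-$ limits of $q^A_{Y_\bullet}(t,z)$ under the substitution $\mu = i\epsilon$, and that the convergence restrictions flagged above — weights real for the unitarity/$L^1$ statement of Theorem~\ref{thm:Ver}(4), imaginary part non-positive for the $S$-kernel resolution sums to converge — match the domain of the regularization parameter used in \cite{CreFal13}. Once that identification is in place, the argument is a routine application of the main theorem.
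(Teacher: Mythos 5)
Your proposal is correct and follows essentially the same route as the paper: verify Assumption \ref{assumption} from the facts collected in the singlet subsection, use the spliced resolution \eqref{eq:res-singlet} for $V=M_{1,1}$ (strict $\rho$-ordered, with the explicit rational $q^A_{M_{r,s}\bullet}(t,z)$ and coinciding iterated limits off $E=\tfrac{1}{\alpha_+}\mathbb{Z}$) to apply Theorem \ref{thm:Ver}, and then identify the categorical quantum dimensions under $\mu=i\epsilon$ with the regularized quantum dimensions of \cite[Proposition 27]{CreFal13}, so that \cite[Theorem 28]{CreFal13} converts the ring homomorphism $K(\cC)\to Q$ into the statement of \cite[Conjecture 26]{CreFal13}. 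The "bookkeeping" worry you flag is not an extra obstacle, since the comparison is made through the closed-form expressions of the regularized quantum dimensions rather than by re-deriving the $\tau\to 0^+$ limits, exactly as in the paper.
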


\begin{remark}\textup{
The category of weight modules of the singlet algebra and of the small unrolled quantum group of $\sltwo$, $u_{q}^H(\sltwo)$, at $q = e^{\frac{\pi i}{p}}$ are braided tensor equivalent \textup{\cite{CLR}}. The quantum group $u_{q}^H(\sltwo)$ has been studied in \textup{\cite{CGP-unrolled}}.
Open Hopf links on simple modules provide one-dimensional representations of the tensor ring in ribbon categories. Taking traces of open Hopf links then gives the usual Hopf links in the case of modular tensor categories. The trace on  almost all simple modules of $u_{q}^H(\sltwo)$ vanishes and one replaces the usual trace by a modified trace on the ideal of negligible objects. This has been done for $u_{q}^H(\sltwo)$ in
\textup{\cite{Creutzig:2016htk}} and the resulting modified Hopf links coincide with the regularized quantum dimension of \textup{\cite{CreFal13}}.}
\end{remark}

\subsubsection{The affine VOA of $\sltwo$ at admissible levels}
 Let $L_k(\sltwo)$ be the simple affine VOA of $\sltwo$ at level $k$.
$k$ is non-integral admissible if it is of the form
 $k = - 2 + \frac{u}{v}$ with $u,v \in \ZZ_{>1}$ co-prime. It has been studied extensively  \cite{AdaVer95, Ad1, RW2, RidFus10, Kawrel19, RidSL208, C23, ACK23, CR1, CreMod12, CMY24}. In particular its category of weight modules is a vertex tensor category \cite{C23}, it is rigid \cite{CMY24}, there is a complete classification of indecomposable modules that implies that the category of weight modules is locally finite and has enough projectives \cite{ACK23} and there is a good realization \cite{Ad1}.
Most importantly for this work, the standard formalism for Verlinde's formula has appeared in this case in \cite{CR1, CreMod12}. As in the singlet case, we restrict to the subcategory of real weight modules, denote it by $\mathcal C$. Since every intertwiner of $L_k(\sltwo)$-modules is in particular one of modules for the Heisenberg subalgebra corresponding to the Cartan subalgebra of $\sltwo$. Since real weight modules of the Heisenberg VOA close under tensor product the same must be true for $\mathcal C$.

Let us first list some modules. The generic modules $\sfmod{\ell}{\TypMod{\lambda ; \Delta_{r,s}}}$ are parameterized by triples $(\ell, \lambda, \Delta_{r, s})$ where $\ell \in \ZZ$ is called the spectral flow index. $\lambda \in \CC/\ZZ$ is a (relaxed-highest) weight and $(r, s)$ are integers with $1 \leq r \leq u-1, 1 \leq s \leq v-1$ and $\Delta_{r, s}$ is a conformal weight associated to these labels, that is  
\begin{equation} \label{eqn:DefConfDimRS}\nonumber
\Delta_{r,s} = \frac{\brac{vr-us}^2 - v^2}{4uv},
\end{equation}
The special weight labels are
\begin{equation} \label{eqn:DefLambda}\nonumber
\lambda_{r,s} = r-1- \frac{u}{v}s
\end{equation}
and there are a few relations
$\lambda_{u-r,v-s} = -\lambda_{r,s} - 2$ and $\Delta_{u-r,v-s} = \Delta_{r,s}$. 
If $\lambda \notin \{\lambda_{r, s}, \lambda_{u-r, v-s}\}$, then $\sfmod{\ell}{\TypMod{\lambda ; \Delta_{r,s}}}$ is simple, projective and injective and hence the same is true for its dual. 
If $\lambda = \lambda_{r, s}$, then there are two modules $\sfmod{\ell}{\TypMod{r,s}^+}$ and $\sfmod{\ell}{\TypMod{u-r,v-s}^-}$ satisfying the non split exact sequences
\begin{equation} \label{ES:Ers}
\begin{split}
&\dses{\sfmod{\ell}{\DiscMod{r,s}^\pm}}{\sfmod{\ell}{\TypMod{r,s}^\pm}}{\sfmod{\ell}{\DiscMod{u-r,v-s}^\mp}}.\qquad 
\end{split}
\end{equation}
The non-generic simple modules are denoted by $\sfmod{\ell}{\DiscMod{r,s}^\pm}$ and $\sfmod{\ell}{\IrrMod{r, 0}}$ and there are quite a few identifications
\begin{equation} \label{eqn:SFRelations} \nonumber
\sfmod{\ell+1}{\IrrMod{r,0}} = \sfmod{\ell}{\DiscMod{u-r,v-1}^+}, \ \ \ \sfmod{\ell-1}{\IrrMod{r,0}} = \sfmod{\ell}{\DiscMod{u-r,v-1}^-}, \ \ \ \sfmod{\ell-1}{\DiscMod{r,s}^+} = \sfmod{\ell}{\DiscMod{u-r,v-1-s}^-}, \ \ \ s \neq v-1.
\end{equation}
In particular a complete list of inequivalent simple non-generic modules is given by $\sfmod{\ell}{\DiscMod{r, s}^-}$. The affine VOA itself is $\IrrMod{1,0}=\sfmod{}{\DiscMod{u-1, v-1}^-}$. The short exact sequence \eqref{ES:Ers} for  $\sfmod{\ell}{\TypMod{r,s}^+}$  in terms of the $\sfmod{\ell}{\DiscMod{r, s}^-}$ is
\begin{equation} \label{ES:Ers}
\begin{split}
&\dses{\sfmod{\ell+1}{\DiscMod{u-r,v-s-1}^-}}{\sfmod{\ell}{\TypMod{r,s}^+}}{\sfmod{\ell}{\DiscMod{u-r,v-s}^-}}, \qquad s \neq v-1\\
&\dses{\sfmod{\ell+2}{\DiscMod{r,v-1}^-}}{\sfmod{\ell}{\TypMod{r,v-1}^+}}{\sfmod{\ell}{\DiscMod{u-r, 1}^-}},
\end{split}
\end{equation}
From this it is clear that basic standard modules are linearly independent.
Drazen Adamovic introduced the very important realization of $L_k(\sltwo)$, namely an embedding $\iota: L_k(\sltwo) \hookrightarrow A = \Vir_{k} \otimes \Pi(0)$.

The category of $\Pi(0)$ is a category of $\mathbb Z \times S^1$-graded vector spaces and in particular $\Vir_{k} \otimes \Pi(0)$-modules inherit this grading. Define the morphism $\rho$ by $\rho(L_{r, s} \otimes \Pi_\ell(\lambda)) = -\ell$, i.e. in particular every object is integer graded in this instance. 

Let $\omega$ be the automorphism of $\sltwo$ corresponding to the Weyl reflection. It lifts to an automorphism of the affinization of $\sltwo$ and also of $L_k(\sltwo)$. It satisfies $\omega \circ \sigma^\ell = \sigma^{-\ell} \circ \omega$ and if we consider modules twisted by $\omega$ then one gets the following relations
\[
\omega(\sfmod{\ell}{\DiscMod{r, s}^+}) = \sfmod{-\ell}{\DiscMod{r, s}^-}, \qquad \omega(\sfmod{\ell}{\TypMod{r,s}^-}) = \sfmod{-\ell}{\TypMod{r,s}^+}, 
\qquad \omega(\sfmod{\ell}{\TypMod{\lambda ; \Delta_{r,s}}}) = \sfmod{-\ell}{\TypMod{-\lambda ; \Delta_{r,s}}}
\]
We consider the embedding $\iota \circ \omega$ of $L_k(\sltwo)$.  In terms of the realization this means that we consider the embedding of $L_k(\sltwo)$ given by the formulae (compare with Proposition 3.1 of \cite{Ad1})
\begin{equation}\nonumber
\begin{split}
e(z) &\mapsto \left( (k+2)L(z) - :\nu(z)\nu(z): -(k+1) \frac{d}{dz}\nu(z) \right) e^{-\frac{2}{k}(\mu - \nu)}, \qquad
h(z) \mapsto -2\mu(z), \qquad
f(z) \mapsto e^{\frac{2}{k}(\mu-\nu)}.
\end{split}
\end{equation}
The identification of modules is
\[
\cG(L_{r, s} \otimes \Pi_{-\ell -1}(\lambda)) \cong  \begin{cases}
    \sigma^\ell(\cE_{-2\lambda+k, \Delta_{r, s}}) & \text{else} \\
    \sigma^\ell(\cE^+_{u-r, v-s}) & \lambda = \nu_{r, s} \\
    \sigma^\ell(\cE^+_{r, s}) & \lambda = \nu_{u-r, v-s} \\
\end{cases} 
\quad
\text{for} \quad 
\nu_{r, s} := \frac{1}{2}(r-1-\frac{u}{v}(s-1)).
\]
The modules $\sigma^\ell(\cD^-_{r, s})$ appear as both submodules and quotients of simple modules in $\cD$, e.g.
\[
\cG(L_{r, s} \otimes \Pi_{-\ell -1}(\nu_{r, s})) \cong \sigma^\ell(\cE^+_{u-r, v-s}) \twoheadrightarrow \sigma^\ell(\cD^-_{r, s}).
\]
Thus every simple module of $L_k(\sltwo)$ appears exactly once as the top of a simple module of the VOA $A$. 
That all irreducible objects in $\cC_A$ are local is proven in \cite{CMY24}. \newline
\indent The only reason that we use $\iota\circ \omega$  instead of $\iota$ is that \cite{CR1} used resolutions in terms of the $\sigma^{\ell}(\cE^+_{r, s})$ and we want to reproduce exactly those results.  
The resolution of the identity is obtained by splicing \eqref{ES:Ers}
\begin{equation} 
\begin{split} \label{eqn:ResL}
&\cdots \lra \sfmod{3v-1}{\TypMod{r,v-1}^+} \lra \cdots \lra \sfmod{2v+2}{\TypMod{r,2}^+} \lra \sfmod{2v+1}{\TypMod{r,1}^+} \\
&\qquad \qquad \lra \sfmod{2v-1}{\TypMod{u-r,v-1}^+} \lra \cdots \lra \sfmod{v+2}{\TypMod{u-r,2}^+} \lra \sfmod{v+1}{\TypMod{u-r,1}^+} \\
&\qquad \qquad\qquad\qquad\lra \sfmod{v-1}{\TypMod{r,v-1}^+} \lra \cdots \lra \sfmod{2}{\TypMod{r,2}^+} \lra \sfmod{}{\TypMod{r,1}^+} \lra \IrrMod{r,0} \lra 0.
\end{split}
\end{equation}
These resolutions are obviously strictly $\rho$-ordered.
The resolutions of other simple objects follow recursively from Corollary 9 of \cite{CR1}.
 The object $N$  is computed in \cite{CMY24}  it is $A \oplus M$, where $M$ fits into the non-split exact sequence
$0 \rightarrow L_{1, 1} \otimes \Pi_2(- \frac{u}{v}) \rightarrow M \rightarrow  L_{1, 2} \otimes \Pi_1(- \frac{u}{2v}) \rightarrow 0$.  

The resolution \ref{eqn:ResL} tells us that for $|t|, |z|  < 1$
\begin{equation}
\begin{split}
&q^A_{\IrrMod{r,0} \bullet}(\ell', \lambda', r', s') = e^{\pi i (k-2\lambda')}\sum_{s=1}^{v-1} (-1)^{s-1} \sum_{\ell = 0}^\infty \left(t^{s-1+2\ell(v-1)} z^{2v\ell+s}e^{-\pi i ((k\ell'+\lambda')(2v\ell+s) + \ell' \lambda_{r, s})} - \right. \\
& \qquad\qquad\qquad \left. t^{2(\ell+1)(v-1)-s}z^{2v(\ell+1)-s} e^{-\pi i ((k\ell'+\lambda')(2v(\ell+1)-s) - \ell' \lambda_{r, s})} \right) \frac{S^\text{Vir}_{(r, s), (r', s')}}{S^\text{Vir}_{(1, 1), (r', s')}} \\
&\qquad= \sum_{s=1}^{v-1} (-1)^{s-1}   \frac{ t^{s-1}z^s e^{-\pi i ((k\ell'+\lambda')s + \ell' \lambda_{r, s})} - 
 t^{2(v-1)-s}z^{2v-s} e^{-\pi i ((k\ell'+\lambda')(2v-s) - \ell' \lambda_{r, s})} }{1 - t^{2(v-1)} z^{2v} e^{-2\pi i v (k\ell'+\lambda')}} \frac{S^\text{Vir}_{(r, s), (r', s')}}{S^\text{Vir}_{(1, 1), (r', s')}}
\end{split}
\end{equation}
so this is a nice rational function in $t,z$. The limit $t, z \rightarrow 1^-$ doesn't depend on the order and is a rational function in trigonometric functions. Using Lemma 10 of \cite{CR1} this can be simplified as in the proof of Theorem 11 of \cite{CR1}. The answer is
\[
q^A_{\IrrMod{r,0}}(\ell', \lambda', r', s') =  \frac{e^{-\pi i \ell' (k+r-1)}}{2\cos(\pi \lambda') + 2(-1)^r \cos(k\pi s')} \frac{S^\text{Vir}_{(r, s), (r', s')}}{S^\text{Vir}_{(1, 1), (r', s')}}.
\]

Since all assumptions of our main Theorem hold, we have verified that
\begin{corollary}
The Verlinde formula of \textup{\cite{CR1, CreMod12}}  for the category of real weight modules of $L_k(\sltwo)$ holds.
\end{corollary}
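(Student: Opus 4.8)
The plan is to verify that the data $V=L_k(\sltwo)$, $A=\Vir_k\otimes\Pi(0)$ with the conformal embedding $\iota\circ\omega$, $\cC=$ the category of real-weight $L_k(\sltwo)$-modules, and $\cD=\cC_A^{\textup{loc}}$ satisfy Assumption \ref{assumption} together with the remaining hypothesis of Theorem \ref{thm:Ver}, namely that $V$ admits a standard strict $\rho$-ordered resolution with a quantum dimension, and then to read the corollary off from the conclusion of Theorem \ref{thm:Ver} and identify its output with the Verlinde formula of \cite{CR1, CreMod12}.

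First I would assemble the structural input. For Assumption \ref{assumption}(1): $\cC$ is a locally finite, rigid vertex tensor category with enough projectives by \cite{C23, ACK23, CMY24}. For (2): $\cD$ is the Deligne product of the category of $\Vir_k$-modules (semisimple since $\Vir_k$ is rational at admissible level) and the semisimple category of $\Pi(0)$-modules of Section \ref{sec:examples}, and its semisimple Verlinde algebra of quantum dimensions is the tensor product of the two established there. For (3): $\Irr(\cC_A)=\Irr(\cD)$ because every simple object of $\cC_A$ is local \cite{CMY24}. For (4): every simple $L_k(\sltwo)$-module occurs exactly once as the top of $\cG$ of a simple $A$-module, giving the bijection $\tau$, and a generic $\sfmod{\ell}{\TypMod{\lambda ; \Delta_{r,s}}}$ with $\lambda$ non-special is a simple basic standard module that is projective and injective, as is its contragredient. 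For (5): the non-split sequences \eqref{ES:Ers} force linear independence of basic standard modules. For (6): $\cD$ inherits the $\ZZ\times S^1$-grading of $\Pi(0)$, and $\rho(L_{r,s}\otimes\Pi_\ell(\lambda))=-\ell$ is a group homomorphism to $(\RR,+)$; in fact $\cD$ is integer-graded. Theorem 3.14 of \cite{CMSY24} then supplies the rigidity of $\cC_A$ used inside the proof of Theorem \ref{thm:Ver}.

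Next I would invoke the explicit resolution \eqref{eqn:ResL} of $\IrrMod{1,0}=V$: each term is a generic module $\sfmod{\ell}{\TypMod{r,s}^+}$, hence basic standard, being $\cG$ of a simple local $A$-module; the spectral-flow indices occurring strictly increase along the complex, so the resolution is strictly $\rho$-ordered; and the geometric summation displayed in the text evaluates $q^A_{\IrrMod{1,0}\bullet}(t,z)$ to a rational function, so the resolution admits a quantum dimension. With the identification $N=\cF(A)=A\oplus M$ from \cite{CMY24} in hand, all hypotheses of Theorem \ref{thm:Ver} are met: $\cC$ admits a Verlinde algebra of quantum dimensions, the map $K(\cC)\to Q$ is a ring homomorphism, and—granting unitarity \eqref{eq:unitary} of the $S$-kernel and $L^1$-integrability of the standard-module characters—one gets $\ch[X]\times_V\ch[Y]=\ch[X\otimes_V Y]$ for all $X,Y$ in $\cC$. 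Since the $S$-kernel used here is, by its construction through the realization out of the $\Vir_k$- and $\Pi(0)$-$S$-kernels of Section \ref{sec:examples}, exactly that of \cite[Section 3.1]{CreMod12} and \cite[Theorem 6]{CR1} (the $\tau$-phase cancelling in quantum dimensions), and since the $\omega$-twist was chosen precisely so that \eqref{eqn:ResL} reproduces the resolutions in terms of $\sfmod{\ell}{\TypMod{r,s}^+}$ used in \cite{CR1}, the resulting Verlinde algebra of characters coincides with the one proposed there.

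The one genuinely delicate point—and where I expect all the real work to be—is the verification of unitarity \eqref{eq:unitary} and $L^1$-integrability for the characters $\ch[\sfmod{\ell}{\TypMod{\lambda ; \Delta_{r,s}}}]$, which are not honest functions but formal power series in the distributions $\delta(2u+\ell\tau-m)$. I would treat these exactly as in the $\Pi(0)$ computation of Section \ref{sec:examples}: regard the characters as one-periodic in the weight label, restrict to real weights, and use $\sum_{r\in\ZZ}e^{2\pi i rx}=\delta(x)$, so that the Fourier-inversion argument given there (and in \cite{CR1}) applies verbatim. Everything else in the argument is bookkeeping of the citations above.
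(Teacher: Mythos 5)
Your proposal is correct and follows essentially the same route as the paper: verify Assumption \ref{assumption} for $V=L_k(\sltwo)\hookrightarrow A=\Vir_k\otimes\Pi(0)$ via $\iota\circ\omega$ using \cite{C23, ACK23, CMY24, Ad1}, exhibit the strictly $\rho$-ordered standard resolution \eqref{eqn:ResL} whose quantum dimension is the displayed rational function, take $N=\cF(A)=A\oplus M$ from \cite{CMY24}, and then apply Theorem \ref{thm:Ver}, matching the resulting $S$-kernel and quantum dimensions with those of \cite{CR1, CreMod12}. Your treatment of the characters as distributions with the Fourier-inversion/one-periodicity argument is also exactly how the paper handles unitarity and integrability in the $\Pi(0)$ discussion of Section \ref{sec:examples}.
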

It is not difficult to get the actual fusion rules from Verlinde's formula together with exactness of the tensor product. We demonstrate this for simple and projective modules.

Let $k$ be admissible  with $v>1$.  The Verlinde fusion rules of standard modules are  \cite[Proposition 13]{CR1}
\begin{multline} \label{eqn:GrFusTyp}\nonumber
\Gr{\sfmod{\ell}{\TypMod{\lambda ; \Delta_{r,s}}}} \fuse \Gr{\sfmod{\ell'}{\TypMod{\lambda' ; \Delta_{r',s'}}}} 
= \sum_{r'',s''} \vfuscoeff{(r,s) (r',s')}{(r'',s'')} \left(\Gr{\sfmod{\ell + \ell' + 1}{\TypMod{\lambda + \lambda' - k ; \Delta_{r'',s''}}}} + \right. \\
\left. \Gr{\sfmod{\ell + \ell' - 1}{\TypMod{\lambda + \lambda' + k ; \Delta_{r'',s''}}}} \right) 
+ \sum_{r'',s''} \brac{\vfuscoeff{(r,s) (r',s'-1)}{(r'',s'')} + \vfuscoeff{(r,s) (r',s'+1)}{(r'',s'')}} \Gr{\sfmod{\ell + \ell'}{\TypMod{\lambda + \lambda' ; \Delta_{r'',s''}}}}.
\end{multline}
In this and the following formulae $\Gr{\sfmod{\ell}{\TypMod{\lambda ; \Delta_{r,s}}}} =\Gr{\sfmod{\ell}{\TypMod{{r,s}}^+}}$ if $\lambda = \lambda_{r, s}$. Also recall that $\Delta_{r, s} = \Delta_{u-r, v-s}$. 
Since $\cC$ is rigid, the tensor product of a module with a projective module is projective. 
The standard projective modules are the $\sfmod{\ell}{\TypMod{\lambda; \Delta_{r, r}}}$,  for $\lambda \notin \{ \lambda_{r, s}   \lambda_{u-r, v-s}\}$
The projective modules at non-generic weight labels are denoted by $\sfmod{\ell}{\StagMod{r, s}}$ and satisfy the  relations 
$
\Gr{\sfmod{\ell}{\StagMod{r, s}}} = \Gr{\sfmod{\ell}{\TypMod{{r,s}}^+}} + \Gr{\sfmod{\ell+1}{\TypMod{{r,s+1}}^+}}
\ 
\text{for}  \ s \neq v-1\ \text{and} \  
\Gr{\sfmod{\ell}{\StagMod{r, v-1}}} = \Gr{\sfmod{\ell}{\TypMod{{r,v-1}}^+}} + \Gr{\sfmod{\ell+2}{\TypMod{{r,v-1}}^+}}
$ in $K(\cC)$.
Let $K^{\text{Proj}}(\cC)$ be the subring of $K(\cC)$ spanned by elements $[P]$ for $P$ projective in $\cC$.
Clearly two projective modules in $\cC$ are isomorphic if and only if they have the same standard modules (the $\sfmod{\ell}{\TypMod{{r,s}}^+}$) as composition factors. 
Thus there is a well-defined  map $P: K^{\text{Proj}}(\cC) \rightarrow \text{Obj}(\cC)$ assigning to each element in $K^{\text{Proj}}(\cC)$ its unique projective object. Explicitly:
\begin{equation}\nonumber
\begin{split}
&P: \Gr{\sfmod{\ell +  1}{\TypMod{\lambda  - k ; \Delta_{R, S}}}}  + \Gr{\sfmod{\ell}{\TypMod{\lambda ; \Delta_{R, S-1}}}} \mapsto  \sfmod{\ell}{\StagMod{R, S-1}},  \ \ \text{for} \ \lambda- k = \lambda_{R, S} \ \text{and} \ S \neq 1 \\
&P:  \Gr{\sfmod{\ell + 1}{\TypMod{\lambda - k ; \Delta_{R, 1}}}}  + \Gr{\sfmod{\ell -1}{\TypMod{\lambda ; \Delta_{R, 1}}}} \mapsto  \sfmod{\ell-1}{\StagMod{u-R, v-1}},  \ \ \text{for} \ \lambda - k = \lambda_{R, 1}  \\
&P: \Gr{\sfmod{\ell  -1}{\TypMod{\lambda + k ; \Delta_{R, S}}}}  + \Gr{\sfmod{\ell}{\TypMod{\lambda; \Delta_{R, S+1}}}} \mapsto  \sfmod{\ell}{\StagMod{R, S+1}},  \ \ \text{for} \ \lambda + k = \lambda_{R, S} \ \text{and} \ S \neq v-1  \\
&P: \Gr{\sfmod{\ell}{\TypMod{\lambda; \Delta_{R, S}}}} \mapsto \sfmod{\ell}{\TypMod{\lambda; \Delta_{R, S}}},  \ \qquad \text{for} \ \lambda \notin \{ \lambda_{R, S},   \lambda_{u-R, v-S}\}
\end{split}
\end{equation}
 We thus get the following actual fusion rules (see \cite[Conjecture]{Creutzig:2018ogj} for a nice presentation)
 \begin{equation}\nonumber
 \begin{split}
 \sfmod{\ell}{\TypMod{\lambda ; \Delta_{r,s}}} \otimes_V \sfmod{\ell'}{\TypMod{\lambda' ; \Delta_{r',s'}}} &= P\left(\Gr{\sfmod{\ell}{\TypMod{\lambda ; \Delta_{r,s}}}} \fuse \Gr{\sfmod{\ell'}{\TypMod{\lambda' ; \Delta_{r',s'}}}}\right), \qquad \lambda \notin \{ \lambda_{r, s}, \lambda_{u-r, v-s}\} \\
\sfmod{\ell}{\StagMod{r, s}} \otimes_V  \sfmod{\ell'}{\TypMod{\lambda' ; \Delta_{r',s'}}} &= P\left(\Gr{\sfmod{\ell}{\StagMod{r, s}}} \fuse \Gr{\sfmod{\ell'}{\TypMod{\lambda' ; \Delta_{r',s'}}}}\right)\\
\sfmod{\ell}{\StagMod{r, s}} \otimes_V  \sfmod{\ell'}{\StagMod{r', s'}}  &= P\left(\Gr{\sfmod{\ell}{\StagMod{r, s}}} \fuse \Gr{ \sfmod{\ell'}{\StagMod{r', s'}} }\right)
 \end{split}
 \end{equation}
Note that Corollary \ref{cor:typprod} tells us that standard modules form a tensor ideal and so the fusion product $\sfmod{\ell}{\TypMod{{r,s}}^+} \otimes_V \sfmod{\ell'}{\TypMod{{r',s'}}^+}$ can only be a direct sum of projective and modules whose composition factors are of type $\sfmod{\ell''}{\TypMod{{r'',s''}}^+}$. Together with exactness of tensor product it is then also easy to obtain those fusion products. 
The same reasoning also applies to the modules of type $\sfmod{\ell}{\TypMod{{r,s}}^-}$, simply by using the second realization of $L_k(\sltwo)$, where the standard modules are the $\sfmod{\ell}{\TypMod{{r,s}}^-}$ together with the $\sfmod{\ell}{\TypMod{\lambda ; \Delta_{r,s}}}$. In particular $\sfmod{\ell}{\TypMod{{r,s}}^+} \otimes_V \sfmod{\ell'}{\TypMod{{r',s'}}^-}$ is projective. 

We turn to the standard simple modules.   The Grothendieck fusion rules involving the $\sfmod{\ell}{\DiscMod{r,s}^+}$ are (Proposition 18 \cite{CR1})
\begin{align}
\Gr{\sfmod{\ell}{\TypMod{\lambda ; \Delta_{r,s}}}} \fuse \Gr{\sfmod{\ell'}{\DiscMod{r',s'}^+}} &= \sum_{r'',s''} \vfuscoeff{(r,s) (r',s'+1)}{(r'',s'')} \Gr{\sfmod{\ell + \ell'}{\TypMod{\lambda + \lambda_{r',s'} ; \Delta_{r'',s''}}}} \notag \\
&\mspace{55mu} + \sum_{r'',s''} \vfuscoeff{(r,s) (r',s')}{(r'',s'')} \Gr{\sfmod{\ell + \ell' + 1}{\TypMod{\lambda + \lambda_{r',s'+1} ; \Delta_{r'',s''}}}}, \notag\\
\Gr{\sfmod{\ell}{\DiscMod{r,s}^+}} \fuse \Gr{\sfmod{\ell'}{\DiscMod{r',s'}^+}} &= 
\begin{cases}
\displaystyle \sum_{r'',s''} \vfuscoeff{(r,s) (r',s')}{(r'',s'')} \Gr{\sfmod{\ell + \ell' + 1}{\TypMod{\lambda_{r'',s+s'+1}; \Delta_{r'',s''}}}} \\
\displaystyle \mspace{20mu} + \sum_{r''} \vfuscoeff{(r,1) (r',1)}{(r'',1)} \Gr{\sfmod{\ell + \ell'}{\DiscMod{r'',s+s'}^+}}, & \text{if \(s+s'<v\),} \\
\displaystyle \sum_{r'',s''} \vfuscoeff{(r,s+1) (r',s'+1)}{(r'',s'')} \Gr{\sfmod{\ell + \ell' + 1}{\TypMod{\lambda_{r'',s+s'+1}; \Delta_{r'',s''}}}} \\
\displaystyle \mspace{20mu} + \sum_{r''} \vfuscoeff{(r,1) (r',1)}{(r'',1)} \Gr{\sfmod{\ell + \ell' + 1}{\DiscMod{u-r'',s+s'-v+1}^+}}, & \text{if \(s+s' \geqslant v\).}
\end{cases}
\notag
\end{align}
we verify that the $\sfmod{\ell + \ell' + 1}{\TypMod{\lambda_{r'',s+s'+1}; \Delta_{r'',s''}}}$ appearing in the decomposition of $\Gr{\sfmod{\ell}{\DiscMod{r,s}^+}} \fuse \Gr{\sfmod{\ell'}{\DiscMod{r',s'}^+}}$ are all  projective and simple and so 
 one gets
 \begin{equation}\nonumber
 \begin{split}
 &\sfmod{\ell}{\TypMod{\lambda ; \Delta_{r,s}}} \otimes_V \sfmod{\ell'}{\DiscMod{r',s'}^+} = P\left(\Gr{\sfmod{\ell}{\TypMod{\lambda ; \Delta_{r,s}}}} \fuse \Gr{ \sfmod{\ell'}{\DiscMod{r',s'}^+}}\right), \qquad \lambda \notin \{ \lambda_{r, s}, \lambda_{u-r, v-s}\} \\
&\sfmod{\ell}{\DiscMod{r,s}^+} \otimes_V \sfmod{\ell'}{\DiscMod{r',s'}^+} = 
\begin{cases}
\displaystyle \bigoplus_{r'',s''} \vfuscoeff{(r,s) (r',s')}{(r'',s'')} \sfmod{\ell + \ell' + 1}{\TypMod{\lambda_{r'',s+s'+1}; \Delta_{r'',s''}}} \\
\displaystyle \mspace{20mu} + \bigoplus_{r''} \vfuscoeff{(r,1) (r',1)}{(r'',1)} \sfmod{\ell + \ell'}{\DiscMod{r'',s+s'}^+}, & \text{if \(s+s'<v\),} \\
\displaystyle \bigoplus_{r'',s''} \vfuscoeff{(r,s+1) (r',s'+1)}{(r'',s'')} \sfmod{\ell + \ell' + 1}{\TypMod{\lambda_{r'',s+s'+1}; \Delta_{r'',s''}}} \\
\displaystyle \mspace{20mu} + \bigoplus_{r''} \vfuscoeff{(r,1) (r',1)}{(r'',1)} \sfmod{\ell + \ell' + 1}{\DiscMod{u-r'',s+s'-v+1}^+}, & \text{if \(s+s' \geqslant v\).}
\end{cases}
 \end{split}
 \end{equation}

\baselineskip=14pt
\newenvironment{demo}[1]{\vskip-\lastskip\medskip\noindent{\em#1.}\enspace
}{\qed\par\medskip}

\footnotesize{

\flushleft

\bibliographystyle{unsrt}

}
\end{document}